\newtheorem{lemma}{Lemma}[section]
\newtheorem{proposition}[lemma]{Proposition}
\newtheorem{theorem}[lemma]{Theorem}
\newtheorem{ex}[lemma]{Example}
\theoremstyle{definition}
\newtheorem{remark}[lemma]{Remark}
\newcommand{\R}{\mathbb{R}}
\newcommand{\N}{\mathbb{N}}
\newcommand{\eps}{\varepsilon}
\newcommand{\be}{\begin{equation}}
\newcommand{\ee}{\end{equation}}
\DeclareMathOperator{\Div}{div}
\numberwithin{equation}{section}
\title[Concavity properties for solutions to some $p$-Laplace equations]{Concavity properties for solutions  to $p$-Laplace \\ equations with concave nonlinearities}
\author[W.\ Borrelli]{William Borrelli}
\author[S. \ Mosconi]{Sunra Mosconi}
\author[M.\ Squassina]{Marco Squassina}
\address[W.\ Borrelli]{Dipartimento di Matematica e Fisica
	\newline\indent
	Universit\`a Cattolica del Sacro Cuore
	\newline\indent
	Via della Garzetta 48, I-25133 Brescia, Italy}
\email{william.borrelli@unicatt.it}
\address[S.\ Mosconi]{Department of Mathematics and Computer Science
	\newline\indent
	University of Catania
	\newline\indent
	Viale A. Doria 6, I-95125 Catania, Italy}
\email{sunra.mosconi@unict.it}
\address[M.\ Squassina]{Dipartimento di Matematica e Fisica
	\newline\indent
	Universit\`a Cattolica del Sacro Cuore
	\newline\indent
	Via della Garzetta 48, I-25133 Brescia, Italy}
\email{marco.squassina@unicatt.it}
\thanks{The authors are members of {\em Gruppo Nazionale per l'Analisi Ma\-te\-ma\-ti\-ca, la Probabilit\`a e le loro Applicazioni} (GNAMPA) of the {\em Istituto Nazionale di Alta Matematica} (INdAM)}
\subjclass[2010]{35J92, 35B50, 26B25}
\keywords{Quasilinear problems, convexity of solutions, maximum principles}
\begin{document}

\begin{abstract}
We obtain new concavity results, up to a suitable transformation, for a class of quasi-linear equations
in a convex domain involving the $p$-Laplace operator and a general nonlinearity satisfying  concavity type assumptions.
This provides an extension of results previously known in the literature only for the torsion and
the eigenfunction equations. In the semilinear case $p=2$ the results are already new since they include new admissible
nonlinearities.
\end{abstract}

\maketitle

	\begin{center}
		\begin{minipage}{9cm}
			\small
			\tableofcontents
		\end{minipage}
	\end{center}

\section{Introduction}

\subsection{Overview}
Convexity properties of solutions to elliptic partial differential equations in a convex domain are a fascinating subject. 
One of the first results in this direction can be traced back to the work of Brascamp and Lieb  in 1976  \cite{BL}. They 
proved that the logarithm function applied to the first eigenfunction $\phi_1$ of the Laplace operator $-\Delta$ with zero Dirichlet
boundary conditions in a convex domain is concave. It is readily seen that 
$\phi_1$ itself \emph{is never} 
concave in any convex domain, thus considering a transformation 
of the solution is necessary. Previously, in 1971, Makar-Limanov \cite{Makar} proved 
that if $u>0$ is the  solution to the torsion equation $\Delta u+1=0$ in a convex planar domain, then $\sqrt{u}$ is concave.
Years later, at the beginning of the eighties, Korevaar \cite{korevaar,kore2} and Kennington \cite{kennington} derived these results from some 
general convexity properties (see also \cite{kaw,CS,CaFri} for related seminal works in those years). 

More precisely, given $\Omega\subset \R^N$ convex and a function $u$ on $\bar\Omega$, 
these are maximum principles for 
$$
(y,z,\lambda)\mapsto  
	u(\lambda \,y			+ (1-\lambda)\, z)
	-\lambda \, u(y)-(1-\lambda)\, u(z),
$$
for $y,z\in\bar\Omega$ and $\lambda\in [0,1]$ since
positivity (negativity) in $\bar\Omega\times\bar\Omega\times [0,1]$ is equivalent to concavity (convexity) of $u$.  As a byproduct, some results about
concavity of positive solutions of semilinear problems can be obtained.
For instance, if $N\geq 2$, $q\in (0,1)$, $\Omega$ is  convex and $u$ is a solution to
$$
\begin{cases}
	-\Delta u=u^q, & \text{in $\Omega$,}   \\
	u>0, & \text{in $\Omega$,}   \\
	u=0, & \text{on $\partial\Omega$\,,}   
\end{cases}
$$
then $u^{(1-q)/2}$ is concave in $\bar\Omega$. 
Roughly speaking, some form of concavity on the nonlinear term forces a suitable power of the solution to be concave. In 1987 Sakaguchi treated, via
a suitable approximation argument to handle lack of regularity of the solutions,
 the following problems
involving the $p$-Laplacian operator  
\[
\begin{cases}
	-\Delta_p u=\lambda \, u^{p-1}, & \text{in $\Omega$,}   \\
	u>0, & \text{in $\Omega$,}   \\
	u=0, & \text{on $\partial\Omega$\,,}   
\end{cases}
\qquad \qquad
\begin{cases}
	-\Delta_p u=1, & \text{in $\Omega$,}   \\
	u>0, & \text{in $\Omega$,}   \\
	u=0, & \text{on $\partial\Omega$\,,}   
\end{cases}
\]
where in the former, $\lambda$ is (necessarily) the first eigenvalue. In \cite{Saka}, he indeed proved that $\log u$ and $u^{(p-1)/p}$ are concave, respectively. 

\subsection{Main results}
Given the quasi-linear elliptic problem

\be
\label{eq:p-laplace}
\begin{cases}
-\Delta_p u=f(u), & \text{in $\Omega$,}   \\
u>0, & \text{in $\Omega$,}   \\
u=0, & \text{on $\partial\Omega$,}   
\end{cases}
\ee
for $p>1$ and $\Omega$ convex and bounded, our focus is to find the most general reaction term $f$ ensuring that $u$ is {\em quasi-concave}, i.\,e.\,, its super-level sets $\{u>k\}$ are convex for any $k$ and to highlight the interplay between the reaction $f$ and suitable concavity properties of $u$.  

A natural method to obtain quasi-concavity is to find an increasing function $\varphi:\R_+\to \R$ such that $\varphi(u)$ is concave.
In this respect, by slightly modifying the proof of Sakaguchi \cite{Saka}, we find the following

\begin{theorem}\label{Sth}
Let $\Omega\subset\R^N$ be a bounded convex domain with $C^2$ boundary, $f:\R_+ \to  [0, +\infty)$ be H\"older continuous with 
\[
M=\inf\big\{t>0: f(t)=0\big\}.
\]
and $u\in W^{1,p}_0(\Omega)$ solve  \eqref{eq:p-laplace}. If
\begin{enumerate}
\item
 $t\mapsto f(t)/t^{p-1}$ is non-increasing,
\item
$t\mapsto e^{(p-1) t}/f(e^t)$ is convex on $(-\infty, \log M)$,
\end{enumerate}
then $\log u$ is concave.
\end{theorem}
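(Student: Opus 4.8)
The plan is to pass to the logarithmic transform, where concavity of $\log u$ becomes a concavity maximum principle, and then to run the argument of Sakaguchi \cite{Saka} with the algebraic step upgraded from the particular nonlinearities $f(t)=\lambda t^{p-1}$ and $f(t)=1$ to the general $f$ allowed here. First I would reduce to concavity of $v:=\log u$. By the regularity theory for the $p$-Laplacian $u\in C^{1,\alpha}_{\mathrm{loc}}(\Omega)$ and $u\in C^{2}$ near points where $\nabla u\neq 0$, so $v$ is well defined in $\Omega$ with $v\to-\infty$ on $\partial\Omega$; since $\nabla u=u\,\nabla v$ and $|\nabla u|^{p-2}\nabla u=e^{(p-1)v}|\nabla v|^{p-2}\nabla v$, a direct computation gives
\[
-\Delta_p v=(p-1)\,|\nabla v|^{p}+g(v)\quad\text{in }\Omega,\qquad g(t):=e^{-(p-1)t}f(e^{t}).
\]
Hypothesis~(1) says precisely that $g$ is non-increasing (hence positive on $(-\infty,\log M)$, an interval containing the range of $v$), and hypothesis~(2) that $1/g$ is convex there. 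Thus it suffices to prove that $v$ is concave on $\bar\Omega$.

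Next, to handle the degeneracy of $\Delta_p$, the limited regularity of $u$, and possible critical points of $u$, I would — this is the promised modification of Sakaguchi's scheme — regularise: smooth and truncate $f$ so as to preserve (1)--(2), replace $\Delta_p$ by the uniformly elliptic operator $\Div\big((\eps+|\nabla u|^{2})^{(p-2)/2}\nabla u\big)$, and exhaust $\Omega$ by convex subdomains with smooth boundary. Since (1) yields uniqueness of positive solutions to the auxiliary Dirichlet problems, their solutions $u_\eps$ are smooth and positive, converge to $u$ together with their gradients, and — the operator now being uniformly elliptic — the transformed functions $v_\eps:=\log u_\eps$ are $C^{2}$ in the whole domain, where concavity is inherited in the limit. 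Hence I may assume from now on that $v\in C^{2}(\Omega)$ and that the manipulations below are licit even at critical points.

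Then I would run the concavity maximum principle for
\[
\mathcal C(x,y,\lambda):=v\big(\lambda x+(1-\lambda)y\big)-\lambda\,v(x)-(1-\lambda)\,v(y),\qquad (x,y,\lambda)\in\bar\Omega\times\bar\Omega\times[0,1],
\]
noting that $v$ is concave iff $\mathcal C\ge 0$. Since $\mathcal C\equiv 0$ on $\{x=y\}\cup\{\lambda\in\{0,1\}\}$ and, because $v\to-\infty$ on $\partial\Omega$, $\mathcal C\to+\infty$ whenever $x$ or $y$ tends to $\partial\Omega$ while $\lambda x+(1-\lambda)y$ stays interior, a compactness argument shows that a strictly negative infimum of $\mathcal C$ would be attained at some $(x_0,y_0,\lambda_0)$ with $x_0\neq y_0$ interior and $\lambda_0\in(0,1)$. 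Writing $z_0:=\lambda_0x_0+(1-\lambda_0)y_0$ and $q:=x_0-y_0$, vanishing of the first derivatives of $\mathcal C$ there forces $\nabla v(x_0)=\nabla v(y_0)=\nabla v(z_0)=:\xi$ and $\xi\cdot q=v(x_0)-v(y_0)$, while positive semidefiniteness of the full Hessian of $\mathcal C$ gives
\[
D^2v(z_0)\ \succeq\ \lambda_0\,D^2v(x_0)+(1-\lambda_0)\,D^2v(y_0),
\]
together with the sharper inequalities obtained by also exploiting the $\partial_\lambda$-directions (Schur complements along $q$), in particular $q^{\top}D^2v(z_0)q\ge 0$. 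Now, using $\Delta_p v=\operatorname{tr}\big(A(\nabla v)D^2v\big)$ with $A(\xi):=|\xi|^{p-2}\big(I+(p-2)\,\xi\otimes\xi/|\xi|^{2}\big)\succeq 0$, and writing the equation at $x_0,y_0,z_0$ — which share the gradient $\xi$ — the combination $\lambda_0(\cdot)_{x_0}+(1-\lambda_0)(\cdot)_{y_0}-(\cdot)_{z_0}$ makes the natural-growth terms $|\xi|^{p}$ cancel, and monotonicity of $M\mapsto\operatorname{tr}(A(\xi)M)$ together with the matrix inequality above yields
\[
g\big(v(z_0)\big)\ \le\ \lambda_0\,g\big(v(x_0)\big)+(1-\lambda_0)\,g\big(v(y_0)\big).
\]

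The hard part will be to close the argument from here: one must extract a contradiction from this $g$-inequality, the strict inequality $v(z_0)<\lambda_0v(x_0)+(1-\lambda_0)v(y_0)$, the second-order rigidity just obtained, and hypotheses~(1)--(2). Since $g$ is only non-increasing with $1/g$ convex, the displayed $g$-inequality \emph{by itself} is consistent with $v(z_0)<\lambda_0v(x_0)+(1-\lambda_0)v(y_0)$ — the eigenvalue and torsion equations already saturate it — so the second-order information must be fed back in. I expect this to go through in one of two ways: either, in the spirit of Korevaar's concavity maximum principle \cite{korevaar}, by showing that near $(x_0,y_0,\lambda_0)$ the function $\mathcal C$ is a subsolution of a linear uniformly elliptic equation and then propagating $\mathcal C\equiv\min\mathcal C<0$ via the strong maximum principle up to the diagonal, where $\mathcal C=0$; or by a one-dimensional reduction along $q$, using that $\gamma(t):=v(y_0+tq)$ satisfies $\gamma'(0)=\gamma'(1)=\gamma'(\lambda_0)=v(x_0)-v(y_0)$ and an ODE inequality in which the convexity of $1/g$ precisely balances the term $(p-1)|\nabla v|^{p}$, forcing $\gamma$ above its chord. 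Making this step precise — in particular, verifying that (1)--(2) are exactly the two conditions required, and handling the directions in which $A(\xi)$ or the relevant Hessians degenerate — is the crux; the compactness argument of the previous paragraph and the approximation scheme are routine.
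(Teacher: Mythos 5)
Your skeleton coincides with the paper's (logarithmic substitution, regularisation, exhaustion by strictly convex subdomains, then a concavity maximum principle split into an interior and a boundary part), but there is a genuine gap precisely at the step you yourself call ``the crux'': hypothesis \emph{(2)} is never actually used. From the second-order conditions at an interior negative minimum of $\mathcal C$, testing only with vectors of the form $(\zeta,\zeta)$, you obtain the arithmetic-mean inequality $g(v(z_0))\le\lambda_0\,g(v(x_0))+(1-\lambda_0)\,g(v(y_0))$, which, as you correctly note, is compatible with $v(z_0)<\lambda_0 v(x_0)+(1-\lambda_0)v(y_0)$ and yields no contradiction. The missing idea is Kennington's: test the Hessian of $\mathcal C$ at the extremal point with \emph{unequal} weights $(\mu_1\zeta,\mu_2\zeta)$, normalised by $\lambda_0\mu_1+(1-\lambda_0)\mu_2=1$, trace against $A(\xi)\succeq0$, and optimise over $(\mu_1,\mu_2)$; this upgrades the arithmetic mean of $b(\cdot,\xi)$ to its \emph{harmonic} mean, and the contradiction then follows from monotonicity (hypothesis \emph{(1)}) together with harmonic concavity of $b(\cdot,\xi)$, i.e.\ convexity of $1/g$ (hypothesis \emph{(2)}, plus superadditivity of the harmonic mean to absorb the $(p-1)|\xi|^p$ term, which with unequal weights no longer cancels and must be treated as part of $b(t,z)$). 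This is exactly Kennington's Theorem 3.1 in the Greco--Porru form, which the paper invokes as Proposition \ref{propKennington}; without it, or a worked-out substitute, your argument does not close. Of the two finishes you sketch, the first (a strong maximum principle for $\mathcal C$ \`a la Korevaar) requires the structural condition \eqref{lions}, i.e.\ genuine concavity of $g$, which is strictly stronger than \emph{(2)} and fails already for $f(t)=t^q$; the second is not carried out.

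A secondary but real defect is the regularisation. Replacing $\Delta_p u$ by $\Div\big((\eps+|\nabla u|^2)^{(p-2)/2}\nabla u\big)$ \emph{before} substituting $u=e^v$ destroys the structure needed for Proposition \ref{propKennington}: since $|\nabla u|^2=e^{2v}|\nabla v|^2$, the regularised principal part becomes $(\eps+e^{2v}|\nabla v|^2)^{(p-2)/2}$, so the coefficients depend on $v$ and not only on $\nabla v$, and the first-order terms no longer combine at points with equal gradients but different values of $v$. The paper (following Sakaguchi) instead regularises with $\eps\,G(u)^{2/p}+|\nabla u|^2$, $G(t)=t^p$, so that $\eps u^2+|\nabla u|^2=e^{2v}(\eps+|\nabla v|^2)$ factors and the transformed equation retains the admissible form \eqref{eg}; one must then also verify that the extra $\eps$-terms in the resulting $b(v,\nabla v)$ preserve monotonicity and harmonic concavity in $v$. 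Finally, the boundary part of the argument near $\lambda\in\{0,1\}$ (where the inner point $z$ also approaches $\partial\Omega$ and $v\to-\infty$ at competing rates) is not settled by the observation that $\mathcal C\to+\infty$ when one endpoint reaches $\partial\Omega$; this is what Korevaar's boundary lemma (Proposition \ref{propKorevaar}) is for. In short: right architecture, but the two load-bearing steps --- the harmonic-mean maximum principle where \emph{(2)} enters, and the structure-preserving regularisation --- are missing, and neither is routine.
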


 Notice that, given any concave increasing $h:\R\to \R$, the concavity of $\varphi(u)$ (called $\varphi$-concavity of $u$) implies that $h(\varphi(u))$ is concave as well. Supposing both $h$ and $\varphi$ are smooth, by computing the second derivative of $h\circ \varphi$ one sees that $h\circ \varphi$ is "more concave" than $\varphi$, so that our interest  is to determine the "less concave" $\varphi$ such that $\varphi(u)$ is concave, as reasonable measure  of the optimal quasi-concavity of $u$. For example,  the less concave increasing functions $\varphi$ are clearly the affine ones, and in this case $\varphi$-concavity reduces concavity.
 
To this end, suppose $f\in C(\R_+, [0, +\infty))$ and let
\[
F(t)=\int_0^t f(u)\, du.
\]
We will  consider the function $\varphi:(0, +\infty)\to \R$ defined by
\be
\label{defvarphi}
\varphi(t):=\int_1^t\frac{1}{F^{1/p}(\tau)}\, d\tau\,
\ee
as detailed in the following theorem, which is the main result of the paper.

\begin{theorem}\label{Mth}
Let $\Omega\subset\R^N$ be a bounded convex domain with $C^2$ boundary, $f:\R_+ \to  [0, +\infty)$ be H\"older continuous with 
\[
M=\inf\big\{t>0: f(t)=0\big\}.
\]
and $u\in W^{1,p}_0(\Omega)$ solve  \eqref{eq:p-laplace}. If
\begin{enumerate}
\item
If $F^{1/p}$ is concave,
\item
$F/f$ is   convex on $(0, M)$,
\end{enumerate}
then $\varphi(u)$ is concave, where $\varphi$ is defined in \eqref{defvarphi}.
 \end{theorem}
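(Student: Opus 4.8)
The plan is to recast the statement through the change of unknown $v:=\varphi(u)$ and then prove that $v$ is concave by a concavity maximum principle in the spirit of Korevaar, Kennington and Sakaguchi. Since $\varphi'=F^{-1/p}>0$ on $(0,M)$, the map $\varphi$ is smooth and strictly increasing, so it suffices to show that $v$ is concave. Writing $u=\psi(v)$ with $\psi=\varphi^{-1}$, so that $\psi'(v)=F(\psi(v))^{1/p}$, and using the identity
\[
\Delta_p(\psi\circ v)=(\psi')^{p-1}\Delta_p v+(p-1)(\psi')^{p-2}\psi''\,|\nabla v|^p ,
\]
problem \eqref{eq:p-laplace} transforms into
\[
-\Delta_p v=g(v)\Big(1+\tfrac{p-1}{p}\,|\nabla v|^p\Big),
\qquad
g(v):=\frac{f(\psi(v))}{F(\psi(v))^{(p-1)/p}} ,
\]
the constant $\tfrac{p-1}{p}$ appearing precisely because $(\varphi')^{-p}=F$. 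A short computation gives the clean identity $g'(u)=p\,(F^{1/p})''(u)$, so that hypothesis $(1)$ is equivalent to $g$ being non-increasing in $u$, hence in $v$; and, rewriting $g=F^{1/p}/(F/f)$, hypothesis $(2)$ provides the complementary convexity-type control on $g$ along $v$ that will make the argument close.

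Before the main step I would dispose of regularity and of the boundary. By the comparison principle $0<u<M$, so $(1)$ and $(2)$ are used only on $(0,M)$. Near $\partial\Omega$, Hopf's lemma for the $p$-Laplacian gives $|\nabla u|\ge c>0$ and $u\sim c(x)\,\mathrm{dist}(x,\partial\Omega)$; since $\mathrm{dist}(\cdot,\partial\Omega)$ is concave on a convex domain and $(1)$ makes $v=\varphi(u)$ behave near $\partial\Omega$ like a concave power — or, in the borderline case, the logarithm — of $\mathrm{dist}(x,\partial\Omega)$, the function $v$ is concave in a one-sided neighbourhood of $\partial\Omega$ (extending continuously to $\bar\Omega$, or tending to $-\infty$ there). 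Because $\Delta_p$ is degenerate, $v$ is in general only $C^{1,\alpha}_{\mathrm{loc}}$; following Sakaguchi I would therefore carry out every computation below on a regularization — $\Delta_p$ replaced by the uniformly elliptic smooth operators $\Div\big((\eps+|\nabla\,\cdot\,|^2)^{(p-2)/2}\nabla\,\cdot\,\big)$ and $f$ by a smooth strictly positive approximant, producing classical solutions $u_\eps\to u$ uniformly — and pass to the limit $\eps\to0$ only at the very end.

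The heart is the concavity maximum principle. On the regularized level set
\[
C(x,y,\lambda)=v\big(\lambda x+(1-\lambda)y\big)-\lambda v(x)-(1-\lambda)v(y),\qquad (x,y,\lambda)\in\bar\Omega\times\bar\Omega\times[0,1],
\]
so that $v$ is concave iff $C\ge0$, and suppose by contradiction that $\min C=-m<0$. The boundary analysis forces the minimum to be attained at an interior point $(\bar x,\bar y,\bar\lambda)$ with $\bar x\ne\bar y$ and $\bar\lambda\in(0,1)$; with $\bar z=\bar\lambda\bar x+(1-\bar\lambda)\bar y$, the first-order conditions give
\[
\nabla v(\bar z)=\nabla v(\bar x)=\nabla v(\bar y)=:\xi,\qquad \xi\cdot(\bar x-\bar y)=v(\bar x)-v(\bar y),
\]
and the second-order condition is positive semidefiniteness of the full Hessian of $C$ at $(\bar x,\bar y,\bar\lambda)$. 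The gain of the transformation is that, $\nabla v$ taking the same value $\xi$ at $\bar x,\bar y,\bar z$, the coefficient matrix $A_p(\xi)=I+(p-2)\,\xi\otimes\xi/|\xi|^2$ of the (regularized) operator, as well as the scalar factor $1+\tfrac{p-1}{p}|\xi|^p>0$, coincide at the three points.

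Inserting the equation at $\bar x,\bar y,\bar z$ and contracting the Hessian inequality against $A_p(\xi)\ge0$ then turns the second-order information into an inequality among $g(v(\bar x)),g(v(\bar y)),g(v(\bar z))$ and $v(\bar x),v(\bar y),v(\bar z)$, where in addition $v(\bar z)<\bar\lambda v(\bar x)+(1-\bar\lambda)v(\bar y)$. Hypothesis $(1)$ (monotonicity of $g$) together with hypothesis $(2)$ (convexity of $F/f$) is exactly what renders that inequality impossible. I expect this last step to be the main obstacle: one must retain the off-diagonal part of the Hessian of $C$ and the contribution of the gradient term $\tfrac{p-1}{p}|\nabla v|^p$, since the crude bound obtained by keeping only traces is not enough — already in the model case $f(t)=t^q$, $q\in(0,1)$, the transformed $g$ fails to be concave in $v$, yet the conclusion holds. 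The contradiction gives $C\ge0$, i.e.\ the regularized $v$ is concave; letting $\eps\to0$ we conclude that $\varphi(u)$ is concave.
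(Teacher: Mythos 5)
Your overall architecture (pass to $v=\varphi(u)$, run a Korevaar--Kennington concavity maximum principle at an interior extremum of the concavity function, handle the boundary separately, regularize and pass to the limit) is the same as the paper's, and your computation of the transformed equation $-\Delta_p v=g(v)\bigl(1+\tfrac{p-1}{p}|\nabla v|^p\bigr)$ with $g=p\,(F^{1/p})'\circ\psi$ is correct. However, there are two genuine gaps. The first is the regularization: replacing $\Delta_p$ by $\Div\bigl((\eps+|\nabla\,\cdot\,|^2)^{(p-2)/2}\nabla\,\cdot\,\bigr)$ \emph{at the level of $u$} does not work, because after the substitution $u=\psi(v)$ the quantity $\eps+|\nabla u|^2=\eps+\psi'(v)^2|\nabla v|^2$ no longer factors as $\psi'(v)^2(\eps+|\nabla v|^2)$; the principal part of the equation for $v_\eps$ then has coefficients depending on $v_\eps$ as well as on $\nabla v_\eps$, and the structure required by the concavity maximum principle (coefficients $a_{ij}(\nabla v)$ only, zeroth-order term $b(v,\nabla v)$ non-increasing and harmonically concave in $v$ for each fixed gradient) is destroyed. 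The paper avoids this by regularizing the \emph{functional}: $u_\eps$ minimizes $I_\eps(w)=\tfrac1p\int_\Omega(\eps\,F(w)^{2/p}+|\nabla w|^2)^{p/2}\,dx-\int_\Omega F(w)\,dx$, and precisely because $\eps\,F(u_\eps)^{2/p}+|\nabla u_\eps|^2=\psi'(v_\eps)^2(\eps+|\nabla v_\eps|^2)$ the Euler--Lagrange equation transforms into $-\Div\bigl((\eps+|\nabla v_\eps|^2)^{(p-2)/2}\nabla v_\eps\bigr)=\tfrac{\psi''}{\psi'}(v_\eps)\,b(\nabla v_\eps)$ with $b>0$ for small $\eps$. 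Some regularization of this designed type is needed in place of yours.

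The second gap is the step you yourself flag as ``the main obstacle,'' which is the heart of the proof and is not supplied. Once the equation has the product form above, the condition that closes the argument is that $v\mapsto\tfrac{\psi''}{\psi'}(v)$ be non-increasing (your hypothesis-(1) observation, which is fine) \emph{and harmonically concave}, i.e.\ that $\psi'/\psi''$ be convex; the identity $(\psi'/\psi'')'=p\,(F/f)'\circ\psi-1$, obtained from $\psi'=F^{1/p}\circ\psi$, shows that this is exactly hypothesis (2). You assert that (1) and (2) make the contact-point inequality impossible, but you neither identify harmonic concavity as the precise requirement nor verify it, and without the product structure of the zeroth-order term (lost under your regularization) the $|\nabla v|^p$ contribution cannot be decoupled at the contact points. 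Two further points are asserted rather than proved: the convergence $u_\eps\to u$ to the \emph{given} solution requires uniqueness for \eqref{eq:p-laplace}, which is a nontrivial Brezis--Oswald-type result (Proposition \ref{BO}) under the merely non-strict monotonicity of $f(t)/t^{p-1}$, and the claim that $v$ is concave near $\partial\Omega$ needs the quantitative conditions \eqref{assKor} on $\varphi$ (Lemma \ref{lemmavarphi}) together with strong convexity of the domain, hence an exhaustion argument for general convex $\Omega$.
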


Let us make some remarks on these two statements.

\begin{remark}(Motivations) 

A conjecture of Lions \cite{lions} going back to the eighties states that any solution of \eqref{eq:p-laplace} for $p=2$ is quasi-concave, i.\,e.\, has convex super-level sets, as long as $\Omega$ is convex and $f\ge 0$. The latter statement (while true in the ball thanks to the celebrated Gidas-Ni-Nirenberg symmetry result) has been recently disproved in \cite{hns}, thus revamping the question wether it is possible to  select a large class of reactions $f$ for which Lions' statement turns out to be true.   

The subject of the optimal concavity properties of $u$ has been the object of   recent research in the restricted framework of optimal {\em power concavity} (i.\,e.\, concavity properties of powers of $u$, for the highest possible power). It is known \cite[item (iii) in Thm. 6.2 ]{kennington} that if $f(t)\equiv 1$ and $p=2$, the solution of \eqref{eq:p-laplace} is such that $u^{q}$ is concave for $q\le 1/2$, while for $q>1/2$ there are convex domains for which $u^q$ fails to be concave. However this phenomenon is known to happen only for convex domains having corners, while in the ball the torsion function is actually concave (thus, the optimal concavity exponent of the ball is $1$).
See \cite{hnst} and the literature therein for further references on the general problem of linking the optimal concavity exponent with the smoothness of the domain.
\end{remark}

\begin{remark}(Relations between theorem \ref{Sth} and \ref{Mth})

Condition  {\em (1)} of Theorem \ref{Mth} implies {\em (1)} of Theorem \ref{Sth}, but not the opposite,  see Lemma \ref{lemma21} and Remark \ref{remconv}.
For $p=2$ the reaction 
\[
f(u)=1+\sqrt{u}
\]
 satisfies the assumptions of Theorem \ref{Sth} but not {\em (2)} of Theorem \ref{Mth}. However, the  concavity property asserted in Theorem \ref{Mth} is in general stronger than the $\log$-concavity provided by Theorem \ref{Sth}. Indeed, if $\psi$ denotes the inverse of the function $\varphi$ defined in \eqref{defvarphi}, from 
\[
\log u=\log \psi(\varphi(u)),
\]
we see that, as long as $t\mapsto \log\psi(t)$ is concave, $\varphi$-concavity of $u$ implies its $\log$ concavity. Lemma \ref{lemmapsi} below shows, under assumption {\em (1)} of Theorem \ref{Mth}, $\log\psi$ is always concave. That $\varphi$-concavity is strictly stronger than $\log$ concavity is readily seen considering power-concavity, see below.
\end{remark}
\begin{remark}(Applications)

 If $f\in C^1(\R_+)$, condition {\em (1)} and {\em (2)} of Theorem \ref{Mth} read
 \[
 \frac{F\, f'}{f^2}\le 1-\frac{1}{p}, \qquad \left( \frac{F\, f'}{f^2}\right)'\le 0
 \]
 on $(0, M)$, so that actually the assumptions are equivalent to 
\[
 \frac{F\, f'}{f^2}\quad \text{non-increasing on $(0, M)$ \qquad and }\qquad \lim_{t\to 0^+} \frac{F\, f'}{f^2}\le 1-\frac{1}{p}
\]
The assumptions in Theorem \ref{Mth} are easily verified for $f(t)=t^q$ with $q\in [0,p-1)$, giving, for its first instance, the concavity of $u^{(p-q-1)/p}$ (briefly called $\alpha$-concavity of $u$ for $\alpha=(p-q-1)/p$)
of the solution $u$ of
\[
\begin{cases}
-\Delta_p u=u^q, & \text{in $\Omega$,}   \\
u>0, & \text{in $\Omega$,}   \\
u=0, & \text{on $\partial\Omega$.}   
\end{cases}
\]
However, already in the semilinear case $p=2$, several meaningful reactions related to natural entropy models can  be treated. We refer to section \ref{sec2} for such examples of non-power like reactions.
\end{remark}

\begin{remark}(Assumptions on $f$)

The first assumptions of both Theorem \ref{Sth} and Theorem \ref{Mth} strengthen as $p$ decreases, meaning that if the  they hold for some $p$, then they do for any $q\ge p$ as well.

The non-negativity hypothesis on the reaction $f$ is not essential and we made it to have a cleaner statement. It is possible to deal with functions obeying 
\[
f(t)\le 0\quad \text{ for all }\quad  t\ge M:=\inf\{t:f(t)\le 0\},
\]
 since in this case an application of the weak maximum principle shows that $u\le M$ (more details on this a-priori estimate can be found in Example \ref{exlin} or at the beginning of Section \ref{sec4}). This, in turn, allows to require the whole assumption in  {\em (1)} of Theorem \ref{Mth} to be fulfilled on the interval $(0, M)$ only. This restriction can sometimes be useful, as it allows to prove quasi-concavity of solutions to simple problems such as
\[
\begin{cases}
-\Delta u+u=1, & \text{in $\Omega$,}   \\
u>0, & \text{in $\Omega$,}   \\
u=0, & \text{on $\partial\Omega$,}   
\end{cases}
\]
in Example \ref{exlin}. 

Let us also mention that the H\"older continuity of $f$ can be weakened to Dini-continuity, since this suffice to obtain $C^2( \Omega)$ of solution of regular elliptic quasilinear problems with reaction $f(u)$.
\end{remark}

\begin{remark}(Comparison with previous results)

Theorem \ref{Sth} is folklore and we stated it only for completeness, since its proof follows the same approximation procedure of \cite{Saka}.
Regarding Theorem \ref{Mth}, the cases $f(t)\equiv 1$ and $f(t)=t^{p-1}$ have been treated in \cite{Saka}.  The power case $f(t)=t^q$ with $0<q<p-1$ is essentially contained in \cite{huaxshu}.  

General reaction terms have been previously considered in \cite{lions, korevaar} in the semilinear case $p=2$, obtaining $\log$-concavity of the positive solution of
\[
-\Delta u=f(u), \qquad u=0 \quad \text{on $\partial \Omega$}
\]
for  $f\in C^1\big([0, +\infty), [0, +\infty)\big)$ fulfilling
\be
\label{lions}
t\mapsto f(t)/t \quad \text{non-increasing\qquad and } \qquad f''(t)\, t^2-f'(t)\, t+f(t)\le 0.
\ee
The second condition above fails for powers $f(t)=t^q$ and is equivalent to the {\em concavity} of $g(t)= f(e^t)/e^t$. Hypothesis {\em (2)} in Theorem \ref{Sth} (in the case $p=2$) is instead equivalent to the so-called {\em harmonic concavity} of $g$. It is well known that any concave function is harmonic concave, but the opposite is not true. For example, both Theorems \ref{Sth} and \ref{Mth} apply in the semilinear case $p=2$ with $f(t)=t^q$ with $q<1$ (the latter providing, as already remarked, a stronger conclusion than the former), but \eqref{lions} fails.

The monograph \cite{kaw} reviews  most of the techniques available to prove quasi-concavity of solutions to elliptic differential equation up to the mid-eighties. Apart from Korevaar-Kennington approach, another fruitful method is to consider the concave envelope, as done in the seminal work \cite{ALL} in the framework of viscosity solutions of fully nonlinear equations. This technique has been applied in \cite{CF,  CF2, juutinen, juutinen2, hoeg, kuhn}. Other close approaches are the quasi-concave envelope method and its modifications \cite{BS,CS1,CS2} and the microscopic concavity principle initiated in \cite{CaFri} and developed in \cite{korlew, bianguan}. The latter has been employed in \cite{lin} to prove that any solution in a convex domain of the plane of \eqref{eq:p-laplace} for $p=2$ and $f(t)=t^q$ with $q>1$ is $(q-1)/2$-concave. Unfortunately, none of these methods easily apply to \eqref{eq:p-laplace} unless additional a-priori (and unexpected) regularity is assumed on the solution $u$.

\end{remark}
 
 \subsection{Outline of the proof}
 The choice of the transformation \eqref{defvarphi} can be motivated, at least heuristically, by the following argument.

 Let $u$ be a solution to \eqref{eq:p-laplace}. Consider an increasing invertible function $\varphi:\R_+\to \R$ with inverse $\psi$. We set
  \[
  v:=\varphi(u)
  \]
 and compute the equation solved by $v$ from the one for $u$, to obtain
  \be\label{eq:generalveq}
  -\left(\Delta v+\frac{p-2}{\vert\nabla v\vert^2}\sum^N_{i,j=1}\partial_i v\partial_j v \partial^2_{ij}v\right)=\frac{f(\psi(v))}{\psi'(v)^{p-1}}\, \vert\nabla v\vert^{2-p}+(p-1)\, \frac{\psi''(v)}{\psi'(v)}\, \vert \nabla v\vert^2=0
  \ee
  In order to simplify \eqref{eq:generalveq} we force a factorisation of the dependencies on $v$ and on $\nabla v$ by requiring
  \be\label{eq:condition}
 \frac{f(\psi)}{(\psi')^{p-1}}=p\, \frac{\psi''}{\psi'}
  \ee
(the constant $p$ is irrelevant of course, we chose that for convenience). This amounts, by integration, to
\[
  \psi'(s)=F(\psi(s))^{1/p}\qquad s>0
  \]
  where $F$ is the primitive of $f$ that vanishes at $\psi(0)$. This condition indeed gives $\varphi$ given in \eqref{defvarphi}. 
  
  By \eqref{eq:condition} we can rewrite \eqref{eq:generalveq} in the more tractable form:
\be\label{eq:newveq}
 \Delta v+\frac{p-2}{\vert\nabla v\vert^2}\sum^n_{i,j=1}\partial_i v\partial_j v \partial^2_{ij}v+b(v,\nabla v)=0
\ee
where 
\[
b(v,\nabla v):= \frac{\psi''(v)}{\psi'(v)}\, \left( p\, \vert\nabla v\vert^{2-p} +(p-1)\, \vert\nabla v\vert^2\right)\, ,
\]
and we can require the assumptions  on \cite[Theorem 3.1]{kennington} to apply Kennington theorem. The latters involve only the ratio $\psi''/\psi'$ and, as it turns out, are equivalent to {\em (1)} and {\em (2)} of Theorem \ref{Mth}.

Of course the problem is that equation \eqref{eq:newveq} lacks sufficient smoothness of both the right and the left hand side to employ directly Kennington's theorem, and the derivation depicted above seems quite rigid. The issue is then to find suitable regularisations of the original equation so that the previous computations makes sense and the requirements on the term $\psi''/\psi'$ are stable in the limit.

Luckily, the expected  condition {\em (1)} in Theorem \ref{Mth} also implies  uniqueness of the solution of the original equation \eqref{eq:p-laplace}. This is proved in Theorem \ref{BO} through a refined version of the celebrated Brezis-Oswald theorem \cite{brezis-oswald}. This preliminary  result is pivotal in ensuring the effectiveness of any approximation.

 It turns out that a good approximation is given by the minimisers $u_\eps$ of the  functional
 \[ 
 I_\eps(u):=\frac{1}{p}\int_\Omega\big (\eps\, F(u)^{2/p}+\vert \nabla u\vert^2\big)^{p/2}\,dx-\int_\Omega F(u)\,dx\,,
 \]
as $\eps \to 0$. Indeed, the aforementioned uniqueness ensures that $u_\eps\to u$ in all reasonable senses and, for sufficiently small $\eps$, we are able to prove that $\varphi(u_\eps)$ is actually concave. The claimed concavity of $\varphi(u)$ then follows.
  
  \subsection{Structure of the paper and notations}

In Section \ref{sec2} we provide examples and application of Theorem \ref{Mth}. Section \ref{sec3} is devoted to some preliminary material: we first recall the main tools we will use, essentially contained in \cite{korevaar, kennington}, then prove some properties of the functions $f$ and $\varphi$, and conclude the section with the aforementioned Brezis-Oswald-type result. In Section \ref{sec4} we focus on the proof of Theorems \ref{Sth} and \ref{Mth}, dividing it in several steps. 
\medskip

{\em Notations:}
In the following $\Omega$ will always denote a bounded open convex domain, with {\em interior} normal usually denoted by $n$. The norm in $L^p(\Omega)$ of a function $u:\Omega\to \R$ will be denoted by $\|u\|_p$ for any $p\in [1, +\infty]$. We will use the same notation also for vector valued functions. Constants may change in value from line to line without changing symbol, as long as their dependance is clear from the context.

\begin{center}
{\bf Acknowledgements}
\end{center}
S. Mosconi is partially supported by project PIACERI - Linea 2 and 3 of the University of Catania.
  Part  of  the  paper  was  developed  during  a  visit  of the  second  author  at  the  Department  of  Mathematics  and  Physics of the Catholic University  of Sacred Hearth, Brescia, Italy.  
  The  hosting  institution  is  gratefully  acknowledged.

 \section{Examples and applications}\label{sec2}
 In this section we provide some examples of application of Theorem \ref{Mth} which were not available via known results.
 \begin{ex}\label{exentr}
The following examples provide non power-like functions $F$ fulfilling the assumptions of Theorem \ref{Mth} for $p=2$ (and thus for any $p\ge 2$). 
\begin{itemize}
\item
Consider the function $F:\R_+\to \R$ given by
\[
F(t)=t\, \log(1+t).
\]
 Then 
\[
\big( F^{1/2}\big)''(t)=- \frac{F(t)^{-\frac{3}{2}}}{4 (t+1)^2} \left[(t-\log(t+1))^2 +  \big((t + 1)^2-1\big) \log^2(t + 1) \right]
\]
which is non-positive for $t\ge 0$. Moreover
\[
\big(F/f\big)''(t)=\frac{(t+1)\big(t\, (t+4)-2\, \log(t+1)\big)\log(t+1)-t^2(t+2)}{(t+1)\big(t+(t+1)\, \log(t+1)\big)^3},
\]
so that, in order to prove that $F/f$ is convex, it suffices to consider the numerator $N(t)$. Using $\log (t+1)\le t$ we get
\[
N(t)\ge g(t):=(t+1)\big(t\, (t+4)-2\, t)\big)\log(t+1)-t^2(t+2), 
\]
and the auxiliary function $g$ obeys 
\[
g^{(k)}(0)=0 \quad \text{for $k=0,1, 2$}, \qquad g^{(3)}(0)=6, \qquad g^{(4)}(t)=\frac{2\, (3\, t^2+6\, t+2)}{(t+1)^3}.
\]
Applying Taylor's theorem with integral remainder, we thus see that $g\ge 0$ on $[0, +\infty)$, proving the claim.
\item
The logarithmic entropy function 
\[
F(t)=(t+1)\, \log(t+1)-t
\]
for $t\in \R_+$  verifies
\[
\big( F^{1/2}\big)''(t)=- \frac{F(t)^{-\frac{3}{2}}}{4\, (t+1)}\left[2\, t+(t+1)\big(\log(t+1)-2\big)\log(t+1)\right]=:- \frac{F(t)^{-\frac{3}{2}}}{4\, (t+1)}g(t)
\]
and the function $g(t)$ is nonnegative, since
\[
g'(0)=0, \qquad g'(t)=\log^2(t+1),
\]
implying that $F^{1/2}$ is concave. Finally,
\[
\big(F/f\big)''(t)=\frac{(t+2)\log(t+1)-2\, t}{(t+1)^2\log^3(t+1)}
\]
whose numerator $N(t)$ fulfills
\[
N(0)=N'(0)=0, \qquad N''(t)=\frac{t}{(t+1)^2}
\]
and is therefore nonnegative.
\item
In general, observe that if a function $F$ satisfies the assumptions of Theorem \ref{Mth} for $p=2$ then, given $q>1$, $F^{q/2}$ satisfies the assumptions for $p=q$ as
\[
(F^{q/2})^{1/p}= F^{1/2}\,,\qquad \frac{F^{q/2}}{(F^{q/2})'}=\frac{2}{q}\frac{F}{f}\,.
\]
\end{itemize} 
\end{ex}

We conclude with an example showing what is the r\^ole of the restriction to the image of the conditions in Theorem \ref{Mth}

\begin{ex}\label{exlin}
Consider the problem
\be
\label{ex1}
\begin{cases}
-\Delta u+u=1 &\text{in $\Omega$}\\
u=0&\text{on $\partial\Omega$}\\
u>0&\text{in $\Omega$}
\end{cases}
\ee
for a convex $\Omega$. A unique, non-negative solution  is immediately obtained via minimisation of the strictly convex functional
\[
J(v)=\int_\Omega |\nabla v|^2+v^2-2\, v\, dx,
\]
but Theorem \ref{Mth} does not naively apply, since the reaction term fails to be non-negative on $\R_+$ and the concavity of $\sqrt{t-t^2/2}$ can possibly hold at most for $t\in [0, 2]$. A way to apply Theorem \ref{Mth} in this setting is to consider the truncation
\[
f(t)=(1-t)_+
\]
and the associated problem 
\be
\label{ex2}
\begin{cases}
-\Delta u=f(u)&\text{in $\Omega$}\\
u= 0&\text{on $\partial\Omega$}\\
u>0&\text{in $\Omega$}
\end{cases}
\ee
The corresponding solution exists, is $C^2(\overline{\Omega})$, and it obeys
\[
\|u\|_\infty\le 1.
\]
Indeed, if the maximum of $u$ is greater than $1$, on the open set $A=\{u>1\}$ the function $u-1$ is harmonic in $A$, non-negative and attains a positive maximum, contradicting the weak maximum principle. Therefore, the solution of \eqref{ex2} coincides with the solution of \eqref{ex1}, ensuring $u\le 1$ for the latter. 
It is readily checked that, denoting with $F$ the primitive of $(1-t)_+$, for $t\in (0, 1)$ it holds
\[
\big(F^{1/2}\big)''(t)=-\frac{F^{-3/2}(t)}{4}<0\qquad \big(F/f\big)''(t)=\frac{1}{(1-t)^3}>0
\]
while clearly $F^{1/2}$ is concave on $\R_+$, so that the solution of \eqref{ex1} has convex level sets. Notice that, looking at the original problem \eqref{ex1}, with
\[
 \tilde{f}=1-u, \qquad \tilde{F}=u-\frac{u^2}{2},
 \]
 the function $\tilde{F}/\tilde{f}$ turns out to be concave for $t>1$.
\end{ex}
\section{Preliminaries}\label{sec3}

In this section we collect various results that will be useful in the proof of the Theorem \ref{Mth}.

\smallskip

\subsection{The convexity function}
We recall here the main tools developed by Korevaar and Kennigton to deal with concavity properties of solution to \eqref{eq:p-laplace}.
Given a continuous function $v:\Omega\to \R$ with $\Omega$ convex, its {\em convexity function} $c:\Omega\times\Omega \to \R$ is defined as 
\[
c(x, y)= \frac{v(x)+ v(y)}{2}-v\Big(\frac{x+ y}{2}\Big).
\]
Clearly, $v$ is concave in $\Omega$ if and only if $c\le 0$ in its domain. The main result of Kennington, generalised in \cite{grecoporru}, is the following; that the assumptions {\em (1)} and {\em (2)} below can be checked only on the image of $v$, while not explicitly stated, follows from inspecting the proof of \cite{grecoporru}.

\begin{proposition}[\cite{grecoporru}]\label{propKennington}
Let $\Omega$ be bounded and convex in $\R^N$, $N\ge 2$ and $v\in C^2(\Omega)\cap C^0(\overline{\Omega})$ solve
\[
\sum_{i, j=1}^Na_{i, j}(Dv)\, \partial_{ij} v+b(v, Dv)=0
\]
for $a_{ij}\in C^0(\R^N)$ such that $(a_{ij})$ is uniformly elliptic and $b\in C^0(\R\times\R^N; \R_+)$. If
\begin{enumerate}
\item
$t\mapsto b(t, z)$ is non-increasing on $v(\Omega)$ for all $z\in \R^N$
\item
$t\mapsto b(t, z)$ harmonic concave on $v(\Omega)$ for all $z\in \R^N$.
\end{enumerate}
Then the convexity function of $v$ cannot  attain a positive maximum in $\Omega\times \Omega$.
\end{proposition}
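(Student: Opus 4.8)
\smallskip
\noindent\textit{Proof strategy.}
The plan is to run the concavity maximum principle of Korevaar and Kennington. Arguing by contradiction, assume $c$ attains a positive maximum at some $(x_0,y_0)\in\Omega\times\Omega$, with value $M:=c(x_0,y_0)>0$, and set $z_0:=(x_0+y_0)/2$, which lies in $\Omega$ by convexity; thus $v$ is of class $C^2$ near each of $x_0,y_0,z_0$. The first order conditions $D_xc(x_0,y_0)=D_yc(x_0,y_0)=0$ force
\[
Dv(x_0)=Dv(y_0)=Dv(z_0)=:\xi,
\]
so the leading coefficients $a_{ij}(Dv)$ coincide at the three points; by uniform ellipticity we may treat $A:=(a_{ij}(\xi))$ as symmetric positive definite, since only the symmetric part of $(a_{ij})$ acts on the Hessian $D^2v$.

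Next I would extract the second order information. As $(x_0,y_0)$ is an interior maximum, the $2N\times 2N$ Hessian of $c$ there is negative semidefinite; written in terms of $P:=D^2v(x_0)$, $Q:=D^2v(y_0)$, $R:=D^2v(z_0)$, this amounts, after multiplication by $-4$, to
\[
\begin{pmatrix} R-2P & R\\ R & R-2Q\end{pmatrix}\geq 0 .
\]
Testing this positive semidefinite block matrix against vectors $(p\eta,q\eta)$ gives
\[
2p^2\,\langle P\eta,\eta\rangle+2q^2\,\langle Q\eta,\eta\rangle\leq(p+q)^2\,\langle R\eta,\eta\rangle\qquad\text{for all }p,q\in\R,\ \eta\in\R^N .
\]
Applying this with $\eta=A^{1/2}e_k$ and summing over an orthonormal basis $\{e_k\}$ contracts the inequality against $A$, giving $2p^2\operatorname{tr}(AP)+2q^2\operatorname{tr}(AQ)\leq(p+q)^2\operatorname{tr}(AR)$. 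Inserting the equation, which at each of $x_0,y_0,z_0$ reads $\operatorname{tr}(A\,D^2v)=-b(v,\xi)$, and abbreviating $g:=b(\cdot,\xi)$, $a:=v(x_0)$, $a':=v(y_0)$, $m:=v(z_0)$, we obtain
\[
(p+q)^2\,g(m)\leq 2p^2\,g(a)+2q^2\,g(a')\qquad\text{for all }p,q\in\R .
\]
Minimising the right hand side over $p,q$ with $p+q=1$ (the case $p+q=0$ being trivial) yields the harmonic mean, hence $g(m)\leq 2\,g(a)\,g(a')/\big(g(a)+g(a')\big)$.

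To conclude, I would confront this with the hypotheses and with $M>0$. Since $c(x_0,y_0)=M>0$ we have $m=\tfrac12(a+a')-M<\tfrac12(a+a')$, so, using that $g$ is non-increasing (hypothesis $(1)$) and harmonic concave (hypothesis $(2)$, i.e.\ $1/g$ convex, applied at the midpoint of $a,a'$),
\[
g(m)\ \geq\ g\big(\tfrac12(a+a')\big)\ \geq\ \frac{2\,g(a)\,g(a')}{g(a)+g(a')}\ \geq\ g(m) .
\]
Since $a,a',m\in v(\Omega)$, only the behaviour of $b$ on the range $v(\Omega)$ is used. If one of the two middle inequalities is strict we reach a contradiction. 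Otherwise all inequalities are equalities, which forces $g$ to be constant on $[m,\tfrac12(a+a')]$ and $1/g$ to be affine on the segment joining $a$ and $a'$; this degenerate case is excluded, as in \cite{grecoporru}, by the strong maximum principle: there $c$ satisfies near $(x_0,y_0)$ a non-degenerate linear elliptic inequality $Lc\geq 0$ with continuous coefficients, obtained by subtracting the equations at $x$, $y$ and $\tfrac{x+y}2$ and using the first order conditions (the sign of the zeroth order term of $L$ being exactly what $(1)$ and $(2)$ secure), and since $c$ vanishes on the diagonal $\{x=y\}$ it cannot attain a positive interior maximum. The borderline case $g(m)=0$, in which the reciprocal degenerates, is handled directly, as then $g\equiv 0$ on $[m,+\infty)\cap v(\Omega)$ and the equation near $z_0$ reduces to $\operatorname{tr}(A\,D^2v)=0$; finally $\xi=0$ needs no separate treatment, since $b(\cdot,0)$ is likewise non-increasing and harmonic concave and $(a_{ij})$ is continuous and elliptic at the origin.

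I expect the delicate step to be the degenerate/equality case: turning ``all inequalities are equalities'' into a genuine contradiction rests on the careful linearisation of the equation satisfied by $c$ near its maximum --- the technical core of the Korevaar--Kennington method, carried out in \cite{grecoporru} --- together with the bookkeeping, already visible above, that every quantity involved is an evaluation of $v$ or $b$ at points of $\Omega$, so that assumptions $(1)$ and $(2)$ need only be imposed on the range $v(\Omega)$.
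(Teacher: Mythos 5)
First, a structural remark: the paper offers no proof of this proposition --- it is quoted from \cite{grecoporru} --- so your attempt can only be measured against the cited argument. Your computation up to the harmonic-mean inequality is correct and is exactly the opening of that argument: the first-order conditions give $Dv(x_0)=Dv(y_0)=Dv(z_0)=\xi$, the Hessian test with vectors $(p\eta,q\eta)$ contracted against $A=(a_{ij}(\xi))$ gives
\[
(p+q)^2\,b(m,\xi)\le 2p^2\,b(a,\xi)+2q^2\,b(a',\xi),
\]
and optimising over $p+q=1$ yields $b(m,\xi)\le 2b(a,\xi)b(a',\xi)/(b(a,\xi)+b(a',\xi))$; you also correctly track that only values of $b$ on $v(\Omega)$ ever enter, which is the point the paper emphasises.

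The genuine gap is the equality case, and it cannot be treated as a marginal degeneracy: the chain
\[
b(m,\xi)\ \ge\ b\Bigl(\tfrac{a+a'}{2},\xi\Bigr)\ \ge\ \frac{2\,b(a,\xi)\,b(a',\xi)}{b(a,\xi)+b(a',\xi)}\ \ge\ b(m,\xi)
\]
closes into equalities \emph{identically} whenever $b(\cdot,\xi)$ is constant on the relevant interval, which is precisely what happens in the intended applications (for $v=\log u$ in the eigenfunction-type case the coefficient $\psi''/\psi'$ is constant, and already for the untransformed torsion equation $b\equiv 1$). So ``all inequalities are equalities'' is not an exceptional branch --- it is the theorem. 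Your proposed resolution does not work as described. The diagonal $\{x=y\}$ lies in the \emph{interior} of $\Omega\times\Omega$, so ``$c=0$ on the diagonal'' is not a boundary condition that a maximum principle can use. More seriously, there is no \emph{non-degenerate} linear elliptic inequality $Lc\ge 0$: to reproduce $2p^2\operatorname{tr}(AP)+2q^2\operatorname{tr}(AQ)-\operatorname{tr}(AR)$ as $Lc$ one is forced to take the $2N\times 2N$ coefficient matrix $\begin{pmatrix}4p^2A&4pqA\\4pqA&4q^2A\end{pmatrix}$, whose scalar factor $\begin{pmatrix}p^2&pq\\pq&q^2\end{pmatrix}$ has rank one, so $L$ is degenerate elliptic with an $N$-dimensional kernel of directions; moreover its blocks would have to carry $a_{ij}(Dv(\cdot))$ evaluated at three distinct base points, which coincide only where the gradients do, i.e.\ only at the critical point itself. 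Hence the classical strong maximum principle is unavailable, and the equality case must be disposed of by the specific device of Korevaar--Kennington reproduced in \cite{grecoporru} (or by perturbing $b$ to be strictly decreasing and passing to the limit, which itself requires care). As written, your argument proves the proposition only when one of the displayed inequalities happens to be strict.
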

We refer to \cite{grecoporru} for a general definition of harmonic concavity. For our purposes, the function $b$ will be positive on $v(\Omega)$ and in this case harmonic concavity coincides with the convexity of $t\mapsto 1/b(t, z)$.

Korevaar singled out a class of transformations allowing to exclude that the maximum of the convexity function is attained at the boundary $\partial(\Omega\times\Omega)$.  The following proposition has been obtained  in \cite[Lemma 2.1 and 2.4]{korevaar} for $u\in C^2(\overline{\Omega})$.  Recall that a strongly convex set is a smooth convex set such that the principal curvatures of $\partial\Omega$ are positive. Given such an $\Omega$ we set
\[
S_\delta=\{x\in \Omega: \delta/2\le {\rm dist}(x, \partial\Omega)\le 2\,\delta\}, \qquad \Omega_\delta=\{x\in \Omega: \delta< {\rm dist}(x, \partial\Omega) \}.
\]

\begin{proposition}[\cite{korevaar}, Lemma 2.1 and 2.4 ]\label{propKorevaar}
Suppose that $\Omega$ is smooth, bounded and strongly convex, $N$ a neighbourhood of $\partial\Omega$ and $u\in C^1(\overline{\Omega})\cap C^2(N)$ be  such that 
\be
\label{assu}
  u>0\quad \text{in $\Omega$}, \qquad u=0\quad \text{on $\partial\Omega$}, \qquad \frac{\partial u}{\partial n}>0 \quad \text{on $\partial\Omega$}.
  \ee
  If $\varphi\in C^2(\R_+; \R) $ fulfils
 \be
 \label{assKor}
\lim_{t\to 0^+}\varphi'(t)=+\infty  , \qquad \varphi''<0<\varphi' \ \text{near $0$}\qquad  \lim_{t\to 0^+}\frac{\varphi(t)}{\varphi'(t)}= \lim_{t\to 0^+}\frac{\varphi'(t)}{\varphi''(t)}=0,
 \ee
 and $v:=\varphi(u)$ then, for any sufficiently small $\delta$, it holds
\be
\label{k1}
 D^2 v(x)<0\quad \text{ in }\quad S_\delta,
 \ee
 \be
 \label{k2}
 \forall \bar x\in \partial\Omega_\delta\qquad v(x)<L_{v, \bar x}(x)\quad  \text{in }\quad \Omega_\delta\setminus\{\bar x\},
 \ee
  where  $L_{v, \bar x}(x)=v(\bar x)+\nabla v(\bar x)(x-\bar x)$ is the tangent plane to the graph of $v$ at $\bar x$.
  
 Moreover, if \eqref{k1}, \eqref{k2} hold for some given $v\in C^2(\Omega_{\delta/2})$, its convexity function   cannot attain a positive maximum on $\partial(\Omega_\delta\times\Omega_\delta)$.
\end{proposition}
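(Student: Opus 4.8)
The statement to prove is Proposition \ref{propKorevaar}, which has two parts: (a) the ``interior'' conclusions \eqref{k1}--\eqref{k2} for $v=\varphi(u)$ near the boundary, and (b) that any $v\in C^2(\Omega_{\delta/2})$ satisfying \eqref{k1}--\eqref{k2} has a convexity function that cannot attain a positive maximum on $\partial(\Omega_\delta\times\Omega_\delta)$. Since this is attributed to Korevaar's Lemmas 2.1 and 2.4, the real content here is a regularity reduction: those lemmas are stated for $u\in C^2(\overline{\Omega})$, but here we only have $u\in C^1(\overline{\Omega})\cap C^2(N)$. So the plan is to isolate exactly where $C^2$ up to the boundary is used in Korevaar's argument and check it is only needed in a neighbourhood of $\partial\Omega$.

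\medskip

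\textbf{Part (a).} The plan is to work in boundary normal coordinates on the neighbourhood $N$ where $u\in C^2$. Write $u = \partial_n u|_{\partial\Omega}\cdot d + O(d^2)$ where $d(x)=\mathrm{dist}(x,\partial\Omega)$ is $C^2$ near $\partial\Omega$ (using smoothness of $\partial\Omega$) and $\partial_n u>0$ on the compact set $\partial\Omega$, hence bounded below by a positive constant there. Then compute $D^2 v = \varphi''(u)\,\nabla u\otimes\nabla u + \varphi'(u)\,D^2 u$. The first term is the dominant one: it is negative semidefinite of rank one, and because $\varphi''<0<\varphi'$ near $0$ and $|\nabla u|$ is bounded below near $\partial\Omega$, $\varphi''(u)|\nabla u|^2\to-\infty$ as $d\to0$ at the rate dictated by $\varphi''(u)$. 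The second term is bounded by $|\varphi'(u)|\cdot\|D^2u\|_{L^\infty(N)}$. The last hypothesis in \eqref{assKor}, $\varphi'/\varphi''\to0$, is precisely what guarantees $\varphi'(u)\|D^2u\|_\infty = o(\varphi''(u))$, so the full Hessian $D^2v$ becomes negative definite on $S_\delta$ for $\delta$ small; this is \eqref{k1}. For \eqref{k2}, I would follow Korevaar: the function $w(x)=L_{v,\bar x}(x)-v(x)$ vanishes to first order at $\bar x$; one shows it is positive on $\Omega_\delta\setminus\{\bar x\}$ by combining the strict concavity \eqref{k1} in the thin shell $S_\delta$ near $\partial\Omega$ with the condition $\varphi/\varphi'\to0$ (which controls how $v$ behaves as $u\to0^+$, forcing $v\to-\infty$ and the graph to bend away from any tangent plane anchored at an interior point at distance $\delta$). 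Only the behaviour in $N$ is used, so $C^2(\overline{\Omega})$ is not needed.

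\medskip

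\textbf{Part (b).} Here I would argue by contradiction: suppose the convexity function $c(x,y)=\frac{v(x)+v(y)}{2}-v(\frac{x+y}{2})$ attains a positive maximum over $\overline{\Omega_\delta}\times\overline{\Omega_\delta}$ at a point $(\bar x,\bar y)$ with, say, $\bar x\in\partial\Omega_\delta$. If both $\bar x$ and $\bar y$ lie in the shell region covered by \eqref{k1}, a standard second-order argument (expanding $v$ quadratically and using $D^2v<0$ there) shows $c$ cannot have a positive value, or its midpoint argument leads to a contradiction with strict concavity. If instead the midpoint $\frac{\bar x+\bar y}{2}$ is interior while $\bar x\in\partial\Omega_\delta$, one uses \eqref{k2}: write the inequality $c(\bar x,\bar y)\ge c$ evaluated against the tangent plane $L_{v,\bar x}$ and note that convexity-function maximality forces $\nabla v$ at $\bar x$ and $\bar y$ to be compatible in a way that \eqref{k2} rules out. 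The only technical point is to reproduce Korevaar's case analysis faithfully; since everything is phrased in terms of $v\in C^2(\Omega_{\delta/2})$, which is an open neighbourhood of $\overline{\Omega_\delta}$, no boundary regularity of $u$ enters at all.

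\medskip

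\textbf{Main obstacle.} The genuinely delicate step is \eqref{k2}: establishing that the graph of $v$ lies strictly below every tangent plane $L_{v,\bar x}$ anchored at $\bar x\in\partial\Omega_\delta$, uniformly as $\delta\to0$. This requires quantitative control of $v=\varphi(u)$ both in the interior (where $u$ is merely $C^1$, so one leans on elliptic estimates or just on $u$ being bounded away from $0$ on $\Omega_\delta$) and near $\partial\Omega$ (where the blow-up rates of $\varphi,\varphi',\varphi''$ must be balanced against the fixed $C^2$ norm of $u$ on $N$). I expect this to reduce, after the coordinate setup, to the same computation Korevaar performs, the point being simply that his computation is local near $\partial\Omega$ and so survives the weakened global regularity hypothesis on $u$.
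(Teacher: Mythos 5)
The paper gives no proof of this proposition: it is quoted from Korevaar (Lemmas 2.1 and 2.4) together with the observation that his argument is local near $\partial\Omega$ and therefore survives the relaxation from $u\in C^2(\overline\Omega)$ to $u\in C^1(\overline\Omega)\cap C^2(N)$. Your overall strategy --- re-run Korevaar's boundary computation using only the $C^2$ regularity on $N$ --- is the right one, and your part (b) is an acceptable, if vague, sketch of Lemma 2.4. However, your part (a) contains a genuine gap.

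You claim that $D^2v=\varphi''(u)\,\nabla u\otimes\nabla u+\varphi'(u)\,D^2u$ becomes negative definite on $S_\delta$ because the first term blows up and dominates the second. But $\varphi''(u)\,\nabla u\otimes\nabla u$ is a \emph{rank-one} negative semidefinite matrix: it is very negative only in the direction of $\nabla u$ and vanishes on the orthogonal complement. A huge negative rank-one matrix plus a bounded (or even merely ``smaller'') perturbation is not negative definite: for any direction $\xi\perp\nabla u$ one has $\langle D^2v\,\xi,\xi\rangle=\varphi'(u)\,\langle D^2u\,\xi,\xi\rangle$, whose sign your argument does not control at all. This is precisely where the \emph{strong convexity} of $\Omega$ must enter, and it never appears in your computation. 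Since $u\in C^2(N)$, $u=0$ on $\partial\Omega$ and $\partial u/\partial n>0$ there, the level sets of $u$ near $\partial\Omega$ are uniformly strongly convex, whence $\langle D^2u\,\xi,\xi\rangle\le -c\,|\xi|^2$ for $\xi$ tangent to the level sets, with $c>0$ depending on the principal curvatures of $\partial\Omega$ and on $\min_{\partial\Omega}\partial u/\partial n$. It is the term $\varphi'(u)\,D^2u$ (with $\varphi'>0$) that supplies negativity in the tangential directions, $\varphi''(u)\,|\nabla u|^2$ that supplies it in the normal direction, and the hypothesis $\varphi'/\varphi''\to0$ in \eqref{assKor} serves to absorb the cross terms and the normal--normal component of $\varphi'(u)\,D^2u$ via Cauchy--Schwarz. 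Without strong convexity, \eqref{k1} is simply false (near a flat portion of the boundary $D^2u$ may vanish identically in tangential directions). A smaller slip: \eqref{assKor} does not force $v=\varphi(u)\to-\infty$ at $\partial\Omega$ (e.g.\ $\varphi(t)=t^\alpha$ with $0<\alpha<1$ satisfies \eqref{assKor} and has $\varphi(0)=0$), so your heuristic for \eqref{k2} should rest on the limit $\varphi/\varphi'\to0$, as in Korevaar, rather than on divergence of $v$.
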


\subsection{The transformation $\varphi$}
Since we are interested in positive solutions of \eqref{eq:p-laplace}, we will assume henceforth that $f$ is extended to $\R$ as an even function, so that $F$ is odd.
First observe that being $F^{1/p}$ concave on $[0, +\infty)$, it has sublinear growth, implying the estimate
\be
\label{growthF}
F(t)\le C\, (1+t^p), \qquad t\ge 0.
\ee
Applying again the concavity assumption we also infer
\begin{equation}
\label{growthf}
f(t)\le C\,(1+t^{p-1}),\qquad t\ge 0
\end{equation}
therefore solutions of \eqref{eq:p-laplace} are critical points of the $C^1$ functional on $W^{1,p}_0(\Omega)$ defined by 
\begin{equation}
\label{defJ}
J(u)=\int_{\Omega} \frac{1}{p}\, |\nabla u|^p-F(u)\, dx.
\end{equation}
A more refined and useful estimate than \eqref{growthf} is contained in the following lemma.

\begin{lemma}\label{lemma21}
Let $F:[0, +\infty)\to [0, +\infty)$ be differentiable and such that $F(0)=0$. If $F^{1/p}$ is concave for some $p\ge 1$, then $t\mapsto F'(t)/t^{p-1}$ is non-increasing.
\end{lemma}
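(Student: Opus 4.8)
The statement is that concavity of $F^{1/p}$ on $[0,+\infty)$ (with $F(0)=0$, $F\ge 0$) forces $t\mapsto F'(t)/t^{p-1}$ to be non-increasing. The plan is to reduce the claim to comparing the slope of $F^{1/p}$ at $t$ with the slope of the chord joining the origin to $(t,F^{1/p}(t))$, since division by $t^{p-1}$ is essentially a disguised way of subtracting off the behaviour dictated by the origin. Concretely, write $G=F^{1/p}$, so $G\ge 0$, $G(0)=0$, $G$ concave, and $F'=p\,G^{p-1}G'$. Then
\[
\frac{F'(t)}{t^{p-1}}=p\left(\frac{G(t)}{t}\right)^{p-1}\frac{G'(t)\,t}{G(t)}=p\left(\frac{G(t)}{t}\right)^{p}\cdot\frac{t\,G'(t)}{G(t)}\cdot\frac{t}{t}
\]
— more cleanly, $F'(t)/t^{p-1}=p\,(G(t)/t)^{p-1}\,G'(t)$. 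So it suffices to show that both factors $t\mapsto (G(t)/t)^{p-1}$ and, in a suitable joint sense, the product with $G'(t)$ is non-increasing. The first factor is non-increasing because a concave function vanishing at $0$ has non-increasing difference quotient $G(t)/t$ (standard), and $s\mapsto s^{p-1}$ is non-decreasing for $p\ge1$. The factor $G'(t)$ is itself non-increasing by concavity. Hence $F'(t)/t^{p-1}$ is a product of two non-negative non-increasing functions, and is therefore non-increasing.

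One subtlety to address first: $G=F^{1/p}$ need not be differentiable where $F$ vanishes, and $G'$ need not exist everywhere a priori — but $F$ is assumed differentiable, and on the set where $F>0$ the chain rule gives $G'=\frac1p F^{1/p-1}F'$ classically; on the set where $F=0$ concavity of $G\ge0$ with $G(0)=0$ forces $G\equiv0$ on an initial interval $[0,t_0]$ (if $G$ vanishes at some $t_1>0$ it vanishes on $[0,t_1]$ by concavity and nonnegativity), on which $F'\equiv0$, so $F'(t)/t^{p-1}=0$ there and the monotonicity across the junction $t_0$ is immediate since the quantity is non-negative and equals $0$ on $[0,t_0]$. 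So the real work is confined to the interval where $F>0$, where everything is smooth.

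I would therefore carry out the steps in this order: (i) observe $F$ differentiable $\Rightarrow$ one may define things; handle the degenerate initial interval where $F=0$ and note the conclusion is trivial there; (ii) on $\{F>0\}$, set $G=F^{1/p}$, record $G$ concave, $G\ge0$, $G(0)=0$, differentiable, and compute $F'/t^{p-1}=p\,(G(t)/t)^{p-1}\,G'(t)$; (iii) invoke the elementary fact that for concave $G$ with $G(0)=0$ the quotient $G(t)/t$ is non-increasing (proof: for $0<s<t$, $G(s)\ge \frac{s}{t}G(t)+(1-\frac st)G(0)=\frac st G(t)$), and that $G'$ is non-increasing; (iv) conclude the product of the non-negative non-increasing functions $(G(t)/t)^{p-1}$ and $G'(t)$ is non-increasing, and glue with the degenerate interval.

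The main obstacle is purely bookkeeping around regularity: making sure $G'$ is well-defined and the chain-rule identity for $F'$ holds on the non-degenerate set, and that no pathology occurs at the boundary point $t_0=\sup\{t:F(t)=0\}$ (where $G'$ might blow up or jump). This is handled as above — either $t_0=0$ and there is nothing to glue, or on $(0,t_0]$ the quantity $F'/t^{p-1}$ vanishes identically while on $(t_0,+\infty)$ it is non-negative and non-increasing, so the full monotonicity on $[0,+\infty)$ follows. No deep estimate is needed; the inequality $G(t)/t$ non-increasing for concave $G$ with $G(0)=0$ is the only real ingredient, and it is one line.
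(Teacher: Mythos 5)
Your argument is essentially the paper's: both proofs rest on the identity $F'(t)/t^{p-1}=p\,(G(t)/t)^{p-1}G'(t)$ with $G=F^{1/p}$, and on the fact that $G'$ and $G(t)/t$ are non-negative and non-increasing for a concave $G\ge 0$ with $G(0)=0$ (the paper phrases this as concavity of $H(s)=F(s^{1/p})$, but the computation is the same). One remark on your extra bookkeeping: you do well to note that $G'$ need not exist where $F=0$, and that non-negativity of $G'$ (which the paper proves from concavity and $G\ge 0$ on an unbounded domain) is needed for the product argument; but your gluing step at $t_0=\sup\{t:F(t)=0\}$ is stated backwards. A function that vanishes on $[0,t_0]$ and is non-negative and non-increasing on $(t_0,+\infty)$ is in general \emph{not} non-increasing across $t_0$ — it could jump up. The case $t_0>0$ is in fact vacuous for a different reason: if $G$ is concave, non-negative on $[0,+\infty)$ and vanishes at the two points $0$ and $t_0>0$, then for $t>t_0$ concavity gives $0=G(t_0)\ge \tfrac{t_0}{t}G(t)$, so $G\equiv 0$ on $[t_0,+\infty)$ and hence $F\equiv 0$ everywhere, making the claim trivial. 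With that correction your proof is complete and matches the paper's.
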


\begin{proof}
Let $G(t)=F^{1/p}(t)$, and observe that $G'\ge 0$ on $[0, +\infty)$, since $G'(t_0)<0$ implies by concavity 
\[
G(t)\le G(t_0)+G'(t_0)\, (t-t_0)\quad \to\quad  -\infty,
\]
contradicting $G\ge 0$. If
\[
H(s)=G^p(s^{1/p}),
\]
we claim that  $H$ is concave. Indeed,
\[
H'(s)=G^{p-1}(s^{1/p})\, G'(s^{1/p})\, s^{(1-p)/p}=\Big( \frac{G(s^{1/p})}{s^{1/p}}\Big)^{p-1} G'(s^{1/p})
\]
and $s\mapsto H'(s)$ is non-increasing on $\R_+$ if and only if so is $t\mapsto H'(t^p)$, i.e., if and only if 
\begin{equation}
\label{temp}
t\mapsto \Big( \frac{G(t)}{t}\Big)^{p-1} G'(t)
\end{equation}
is non-increasing. Since both 
\[
t\mapsto G'(t)\qquad \text{and}\qquad 
t\mapsto \frac{G(t)}{t}=\frac{G(t)-G(0)}{t-0}
\]
are non-negative, and non-increasing by the concavity of $G$, so is \eqref{temp}, being the product of non-negative, non-increasing functions.
It follows that $H(s)=F(s^{1/p})$ is concave, so that its derivative is non-increasing. Since
\[
H'(s)=\frac{1}{p}\, F'(s^{1/p})\, s^{(1-p)/p},
\]
gives the claim by the monotone increasing change of variable $t=s^{1/p}$.
\end{proof}
\begin{remark}\label{remconv}
The opposite implication in the previous assertion fails to be true.  When $p=2$, the function
\[
F(t)=\sqrt{1+t}+t^2, \qquad t\ge 0
\]
is such that $F'(t)/t$ is non-increasing on $\R_+$, but $F^{1/2}$ is convex for $t\ge 0$.
\end{remark}

We next provide the elementary proof of a property, mentioned in the introduction, of the inverse $\psi=\varphi^{-1}$  of the function $\varphi$ given \eqref{defvarphi}. In that framework, we  apply the lemma with $G=F^{1/p}$.

\begin{lemma}\label{lemmapsi}
Suppose $G\in C^0([0, +\infty); \R_+)$ is concave with $G(0)=0$ and let
\[
\varphi(t)=\int_1^t\frac{1}{G(\tau)}\, d\tau, \qquad \psi(s)=\varphi^{-1}(s).
\]
Then $s\mapsto \log\psi(s)$ is concave.
\end{lemma}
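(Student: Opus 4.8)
The plan is to reduce the claimed concavity of $\log\psi$ to the monotonicity of the difference quotient $t\mapsto G(t)/t$, which is exactly the elementary fact about concave functions vanishing at $0$ already used in the proof of Lemma \ref{lemma21}.

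First I would record the regularity of $\psi$ and the differential identity it satisfies. Since $G$ is concave with $G(0)=0$ and $G\ge 0$, it is strictly positive on $(0,+\infty)$ (otherwise $G$ would vanish identically beyond its first zero, and $\varphi$ would not be defined). Hence $\varphi$ is $C^1$ on $(0,+\infty)$ with $\varphi'=1/G>0$, so $\varphi$ is a strictly increasing $C^1$-diffeomorphism onto an open interval $I\subseteq\R$, and $\psi=\varphi^{-1}\colon I\to(0,+\infty)$ is $C^1$ and strictly increasing. Differentiating $\varphi(\psi(s))=s$ gives $\varphi'(\psi(s))\,\psi'(s)=1$, that is
\[
\psi'(s)=G(\psi(s)),\qquad s\in I.
\]

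Next I would differentiate $\log\psi$ and recognise it as a composition of monotone functions. From the identity above,
\[
(\log\psi)'(s)=\frac{\psi'(s)}{\psi(s)}=\frac{G(\psi(s))}{\psi(s)}=h(\psi(s)),\qquad h(t):=\frac{G(t)}{t}=\frac{G(t)-G(0)}{t-0}.
\]
Since $G$ is concave, the difference quotient $h$ based at $0$ is non-increasing on $(0,+\infty)$; and since $\psi$ is increasing on $I$, the composition $s\mapsto h(\psi(s))=(\log\psi)'(s)$ is non-increasing on $I$. A $C^1$ function on an interval whose derivative is non-increasing is concave, so $\log\psi$ is concave on $I$.

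As for the main obstacle: there is essentially none beyond bookkeeping. The only point that needs a word of care is the well-posedness and $C^1$ regularity of $\psi$ (so that $\psi'$ and $(\log\psi)'$ make pointwise sense), which is immediate once one observes $G>0$ on $(0,+\infty)$; everything else is the one-line difference-quotient characterisation of the concavity of $G$.
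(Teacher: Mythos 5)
Your proof is correct and follows exactly the paper's argument: compute $(\log\psi)'=\psi'/\psi=G(\psi)/\psi$ and reduce, via the monotonicity of $\psi$, to the fact that the difference quotient $t\mapsto G(t)/t$ is non-increasing by concavity of $G$ and $G(0)=0$. The extra remarks on the positivity of $G$ and the $C^1$ regularity of $\psi$ are sensible bookkeeping that the paper leaves implicit.
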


\begin{proof}
We compute
\[
\left(\log\psi\right)'=\frac{\psi'}{\psi}=\frac{G(\psi)}{\psi}
\]
and, since  $\psi$ is increasing, the claim is equivalent to the fact that $t\mapsto G(t)/t$ is non-increasing. But this follows from the assumed concavity of $G$ together with $G(0)=0$.
\end{proof}

In order to apply Proposition \ref{propKorevaar}, we point out the following.

\begin{lemma}\label{lemmavarphi}
Let $f\in C^0(\R_+, [0, +\infty))$ with
\[
M=\inf\big\{t>0:f(t)=0\big\}>0.
\]
If the corresponding $F$ is such that $F^{1/p}$ is concave and $F/f$ is convex in $(0, M)$, the function 
\[
\varphi(t)=\int_1^tF(s)^{-1/p}\, ds, \qquad \mbox{where $F(s)=\int_0^s f(\tau)\, d\tau, \qquad t, s>0$}
\]
fulfils \eqref{assKor} on $(0, M)$. 
\end{lemma}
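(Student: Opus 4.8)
The goal is to verify the four conditions in \eqref{assKor}, namely $\varphi'(t)\to+\infty$, $\varphi''<0<\varphi'$ near $0$, and the two limits $\varphi/\varphi'\to 0$, $\varphi'/\varphi''\to 0$ as $t\to 0^+$. Since $\varphi'(t)=F(t)^{-1/p}$ and $F(t)=\int_0^t f(\tau)\,d\tau$ with $f\ge 0$ continuous and $f>0$ on $(0,M)$, we have $F>0$ on $(0,M)$ and $F(0)=0$, so $\varphi'>0$ there and $\varphi'(t)\to+\infty$ as $t\to 0^+$ immediately. For $\varphi''$: differentiating gives $\varphi''(t)=-\tfrac1p F(t)^{-1/p-1}f(t)$, which is $\le 0$ everywhere on $(0,M)$ and strictly negative wherever $f>0$; near $0$ we indeed have $f>0$ (by definition of $M$), so $\varphi''<0$ near $0$. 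This disposes of the first two requirements without using the structural hypotheses.

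**The two limits.** For $\varphi'/\varphi''$ one computes
\[
\frac{\varphi'(t)}{\varphi''(t)}=\frac{F(t)^{-1/p}}{-\tfrac1p F(t)^{-1/p-1}f(t)}=-p\,\frac{F(t)}{f(t)},
\]
so the claim $\varphi'/\varphi''\to 0$ is exactly $F(t)/f(t)\to 0$ as $t\to 0^+$. Here is where the hypotheses enter: $F/f$ is convex on $(0,M)$, hence it has a limit $\ell\ge 0$ as $t\to 0^+$ (a convex function on an interval is monotone near the endpoint, or one argues it is bounded below by $0$ and concave-free). If $\ell>0$, then $f(t)\le (F(t)/(\ell/2))$ for $t$ small, i.e. $F'/F\ge 2/\ell>0$ near $0$, forcing $\log F(t)\to$ a finite value, contradicting $F(0^+)=0$. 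Hence $\ell=0$. (Alternatively, one invokes Lemma \ref{lemma21}: concavity of $F^{1/p}$ gives $F'(t)/t^{p-1}=f(t)/t^{p-1}$ non-increasing, so $f(t)\ge c\,t^{p-1}$ near $0$ for some $c>0$, whence $F(t)\le C t^p$ and $F/f\le C' t\to 0$.) This last route is cleaner and I would use it. The same bound handles $\varphi/\varphi'$: since $\varphi'(t)=F(t)^{-1/p}\ge (Ct^p)^{-1/p}=c''/t$, we get $\varphi(t)/\varphi'(t)=F(t)^{1/p}\varphi(t)$, and it remains to show $F(t)^{1/p}\varphi(t)\to 0$. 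Using $F(t)^{1/p}\le C^{1/p}t$ and estimating $|\varphi(t)|=|\int_1^t F^{-1/p}|$: near $0$, $F^{-1/p}(\tau)\le C^{-1/p}\tau^{-1}$ is integrable against... no, $\tau^{-1}$ is \emph{not} integrable at $0$, so $\varphi(t)\to-\infty$; but only logarithmically, $|\varphi(t)|\le C|\log t|+O(1)$, and $t\,|\log t|\to 0$. So $\varphi/\varphi'\to 0$ as well.

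**Where the real work is.** The genuinely delicate point is obtaining the two-sided power bound $c\,t^p\le F(t)\le C\,t^p$ near $0$, or at least the one-sided bound $F(t)\le C t^p$ that drives both limits — the lower bound on $\varphi'$ needs $F(t)\le Ct^p$, and the estimate $F^{1/p}(t)\le C^{1/p}t$ needs the same. The upper bound $F(t)\le C(1+t^p)$ globally is already recorded as \eqref{growthF} from concavity of $F^{1/p}$, and near $0$ it sharpens to $F(t)\le Ct^p$ because $F^{1/p}(0)=0$ and $F^{1/p}$ concave implies $F^{1/p}(t)\le (F^{1/p})'(0^+)\,t$ if the right derivative at $0$ is finite; if it is $+\infty$ one still has $F^{1/p}(t)/t$ bounded on any compact subinterval of $(0,M]$ by concavity plus $F^{1/p}(0)=0$... actually $F^{1/p}(t)/t$ is non-increasing (concavity + vanishing at $0$), so it is bounded on $[t_0,M]$ but could blow up as $t\to0$. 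This is the subtlety: $F^{1/p}(t)/t\le F^{1/p}(t_0)/t_0$ only for $t\ge t_0$. Hmm — so I would instead get the bound directly from Lemma \ref{lemma21}: $f(t)/t^{p-1}$ non-increasing means $f(t)/t^{p-1}\ge f(M/2)/(M/2)^{p-1}=:c>0$ for $t\le M/2$, hence $f(t)\ge c\,t^{p-1}$ and integrating $F(t)\ge (c/p)t^p$; combined with $F/f$ convex hence (by the argument above) $F(t)/f(t)\to 0$ and the global bound $F(t)\le C(1+t^p)\le C' t^p$ for $t\le M$ — wait, that last step again needs care but on $(0,M]$ with $M$ fixed, $F(t)\le C(1+t^p)$ gives $F(t)\le C(1+M^p)$, a constant, not $O(t^p)$. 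The clean fix: combine $F(t)\ge (c/p)t^p$ with $F(t)/f(t)\to 0$ to get $\varphi'/\varphi''\to 0$, and combine $F(t)\ge (c/p)t^p$ with the elementary bound $|\varphi(t)|=O(|\log t|)$ (which only uses $F(\tau)\ge (c/p)\tau^p$, giving $F^{-1/p}(\tau)\le C\tau^{-1}$) to get $\varphi\varphi'^{-1}=F^{1/p}\varphi$; but we still need $F^{1/p}\le Ct$. For that, use concavity of $F^{1/p}$ \emph{once more in the other direction}: $F^{1/p}(t)\le F^{1/p}(M) + (F^{1/p})'(M^-)(t-M)$... no. The right statement is: a concave function vanishing at $0$ satisfies $G(t)\le G(M)\,t/M$ for $t\in[0,M]$ (chord lies below graph on $[0,t]$ but the \emph{graph} lies below the chord on $[t,M]$... concave means graph above chords, so $G(t)\ge G(M)t/M$ on $[0,M]$, the wrong direction). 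So in fact $F^{1/p}(t)\ge (F^{1/p}(M)/M)t$, i.e. $F(t)\ge c't^p$ — consistent with the above but no free upper bound of the form $Ct^p$. Therefore the only available upper bound near $0$ is $F^{1/p}(t)/t$ non-increasing, finite for $t$ bounded away from $0$, possibly $+\infty$ at $0$. So $\varphi/\varphi'=F^{1/p}\varphi$ where $F^{1/p}\to0$ and $\varphi\to-\infty$; I must show the product $\to0$. I would use: $\varphi(t)=-\int_t^1 F^{-1/p}$, and for any $\eps>0$ pick $\delta$ with $F^{1/p}(\delta)/F^{1/p}(t)\le$ whatever; a monotonicity/L'Hôpital-type argument — since $(F^{1/p}(t))^{-1}=\varphi'(t)$ and $\varphi(t)\to-\infty$, by L'Hôpital $\lim \varphi(t)/\varphi'(t)^{?}$... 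Cleanest: show $\varphi(t)\,F(t)^{1/p}\to 0$ via $\lim_{t\to0}\tfrac{\varphi(t)}{1/F(t)^{1/p}}$, an $\infty/\infty$ form, $=\lim \tfrac{\varphi'(t)}{-(1/p)F^{-1/p-1}f} = \lim\tfrac{F^{-1/p}}{-(1/p)F^{-1/p-1}f}= -p\lim F/f=0$ by the convexity-of-$F/f$ argument. So \textbf{the one fact that does all the work is $F(t)/f(t)\to 0$ as $t\to0^+$}, which follows from convexity of $F/f$ on $(0,M)$ together with $F(0^+)=0$; everything else is L'Hôpital. I expect the writeup to be short, with the endpoint-limit-of-a-convex-function argument being the only non-routine ingredient.
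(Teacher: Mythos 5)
Your final argument is correct, but it reaches the two limits in the opposite order from the paper, and on the way you endorse one genuinely false intermediate claim that must be excised. The paper first proves $\varphi/\varphi'\to 0$ by elementary two-sided bounds: $F(t)\le Ct$ near $0$ (just from $f$ being bounded on $[0,M]$ --- not $F(t)\le Ct^p$, which, as you correctly realize, is unavailable) together with $F^{1/p}(t)\ge c\,t$ (concavity of $F^{1/p}$ plus $F^{1/p}(0)=0$ puts the graph \emph{above} the chord through $0$ and $M$), whence $\varphi/\varphi'=F^{1/p}(t)\int_t^1F^{-1/p}\,d\tau\le C\,t^{1/p}|\log t|\to 0$; it then deduces $\varphi'/\varphi''\to 0$ by a \emph{reverse} application of de l'H\^opital, using only that $\lim F/f$ exists by convexity. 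You instead identify $\lim_{t\to 0^+}F/f=0$ directly --- convexity gives existence of the limit $\ell$, and if $\ell>0$ then $(\log F)'=f/F\le 1/\ell'$ near $0$ (your displayed inequality $F'/F\ge 2/\ell$ has the wrong direction; it is the correct inequality $F'/F\le 2/\ell$ that keeps $\log F$ bounded and contradicts $F(0^+)=0$) --- and then obtain $\varphi/\varphi'\to 0$ by a forward de l'H\^opital, which is legitimate since the denominator $\varphi'$ tends to $+\infty$ and $\varphi''\neq 0$ on $(0,M)$. Both orders work, and yours is arguably the more standard use of the rule. What must go is the parenthetical ``alternative'' you initially call ``cleaner'': from $f(t)\ge c\,t^{p-1}$ one gets the \emph{lower} bound $F(t)\ge (c/p)t^p$, and neither $F(t)\le Ct^p$ nor $F/f\le C't$ follows; you catch this yourself two paragraphs later, but your final writeup should contain only the chain $\lim\varphi/\varphi'=\lim\varphi'/\varphi''=-p\lim F/f=0$ with the convexity-at-the-endpoint argument as the sole substantive input. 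The one elementary fact you circled around without ever invoking is the crude bound $F(t)\le t\,\sup_{[0,M]}f$, which is what makes the paper's direct estimate of $\varphi/\varphi'$ a one-line computation.
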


\begin{proof} 
By the definition of $M$, for any $t\in (0, M)$ it holds
\[
\varphi''(t)=-\frac{f(t)}{p\, F(t)^{1+1/p}}<0, 
\]
and from $\varphi'(t)=F(t)^{-1/p}$ and $F\ge 0$ we readily have $\varphi'(t)\to +\infty$ for $t\to 0^+$.
  By construction $F(t)\le C\, t$ in $(0, M)$ and from the concavity of $F^{1/p}$ we infer $F^{1/p}(t)\ge c\, t$ for $t\in (0, M)$, $c>0$. Therefore, for  $t\in (0, M)$,
\[
0\le \frac{\varphi(t)}{\varphi'(t)}=F^{1/p}(t)\int_t^1\frac{1}{F^{1/p}(t)}\, dt \le \frac{C^{1/p}}{c}\, t^{1/p}\, \log t\to 0.
\]
It remains to prove that
\[
\lim_{t\to 0^+} \frac{\varphi'(t)}{\varphi''(t)}=\lim_{t\to 0^+} p\, \frac{F(t)}{f(t)}=0.
\]
By the convexity of $F/f$ the  limit exists, as does, by monotonicity, the limit
\[
l=\lim_{t\to 0^+}\varphi(t)\le 0.
\]
Hence the claim follows from the previous limit by de l'H\^opital rule, either applied to $\varphi/\varphi'$ if $l=-\infty$   or to $(\varphi-l)/\varphi'$  if $l$ is finite.
\end{proof}
\subsection{A Brezis-Oswald type result}
It is known that the functional $J$ \eqref{defJ} is convex in the variable $w=u^p$ and this implies the monotonicity of the operator $w\mapsto(-\Delta_p w^{1/p})/w^{(p-1)/p}$, as first remarked for $p=2$ by Benguria-Brezis-Lieb \cite{bbl}, see also \cite{brasco-franzina}.

An useful consequence (see e.g. \cite{diaz-saa}) of this convexity property is the following
\begin{lemma}\label{lem:monotonicity}
For $i=1,2$ let $w_i\in L^\infty(\Omega)\cap W^{1,p}(\Omega)$ be such that 
\[
w_i>0, \qquad \Delta_p w_i \in L^\infty(\Omega), \qquad w_1-w_2\in W^{1,p}_0(\Omega), \qquad w_2/w_1, \ w_1/w_2\in L^\infty(\Omega) .
\]
Then there holds
\be\label{monotonicity}
\int_\Omega \left(\frac{-\Delta_p w_1}{w^{p-1}_1}-\frac{-\Delta_p w_2}{w^{p-1}_2}  \right)(w_1^p-w_2^p)\,dx\ge 0
\ee
\end{lemma}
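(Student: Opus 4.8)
The plan is to reduce \eqref{monotonicity} to the pointwise convexity of the map $w \mapsto \int_\Omega |\nabla w^{1/p}|^p\,dx$ on the cone of nonnegative $W^{1,p}$ functions, i.e. the Benguria–Brezis–Lieb / Díaz–Saa convexity. Concretely, set $w = t\, w_1^p + (1-t)\, w_2^p$ for $t \in [0,1]$ and consider $\Phi(t) := \int_\Omega |\nabla w^{1/p}|^p\,dx$. The key algebraic fact is that, under the stated hypotheses on $w_1, w_2$ (boundedness away from $0$ and $\infty$, and $w_1 - w_2 \in W^{1,p}_0$), one has $w^{1/p} \in W^{1,p}(\Omega)$ with the expected chain rule, so $\Phi$ is well defined and finite, and moreover $\Phi$ is convex on $[0,1]$. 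Granting this, $\Phi'(0^+) \le \Phi(1) - \Phi(0)$ and $\Phi'(1^-) \ge \Phi(1) - \Phi(0)$, hence $\Phi'(1^-) - \Phi'(0^+) \ge 0$.

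The second step is to identify the one-sided derivatives $\Phi'(0^+)$ and $\Phi'(1^-)$ with the boundary terms in \eqref{monotonicity}. Differentiating under the integral sign (justified by the $L^\infty$ bounds, which keep $w$ uniformly bounded above and below, so all integrands and their $t$-derivatives are dominated), one computes
\[
\Phi'(t) = \int_\Omega |\nabla w^{1/p}|^{p-2}\,\nabla w^{1/p}\cdot \nabla\!\left(\frac{w_1^p - w_2^p}{p\, w^{(p-1)/p}}\right) dx .
\]
At $t = 1$, $w^{1/p} = w_1$ and the test function is $(w_1^p - w_2^p)/(p\,w_1^{p-1})$, which lies in $W^{1,p}_0(\Omega)$ because $w_1 - w_2 \in W^{1,p}_0$ and the ratios are bounded; using $\Delta_p w_1 \in L^\infty$ we may integrate by parts to get $\Phi'(1^-) = \tfrac1p\int_\Omega \dfrac{-\Delta_p w_1}{w_1^{p-1}}\,(w_1^p - w_2^p)\,dx$. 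Symmetrically $\Phi'(0^+) = -\tfrac1p\int_\Omega \dfrac{-\Delta_p w_2}{w_2^{p-1}}\,(w_1^p - w_2^p)\,dx$ (the sign flip coming from $\tfrac{d}{dt}(w_1^p-w_2^p)$ being the numerator while the base point is $w_2$). Subtracting and multiplying by $p$ yields exactly \eqref{monotonicity}.

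I expect the main obstacle to be the rigor of Step 1 together with the integration by parts in Step 2 at low regularity: one must verify that $w^{1/p}$ genuinely belongs to $W^{1,p}(\Omega)$ with $\nabla w^{1/p} = \tfrac1p w^{(1-p)/p}(t\nabla w_1^p + (1-t)\nabla w_2^p)$, that $(w_1^p - w_2^p)/(p\,w_i^{p-1}) \in W^{1,p}_0(\Omega)$ is an admissible test function for the distribution $\Delta_p w_i \in L^\infty$, and that the difference quotients defining $\Phi'(0^+), \Phi'(1^-)$ converge — all of which rest on the uniform two-sided bounds $w_2/w_1, w_1/w_2 \in L^\infty$. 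The convexity of $\Phi$ itself is the classical hidden-convexity argument: writing $a = w_1^{1/p}$, $b = w_2^{1/p}$ and using $|\nabla(t a^p + (1-t) b^p)^{1/p}|^p \le t|\nabla a|^p + (1-t)|\nabla b|^p$ pointwise (a consequence of the convexity of $(x,\xi)\mapsto |\xi|^p/x^{p-1}$ on $\R_+\times\R^N$ applied to $x = a^p$, $\xi = p a^{p-1}\nabla a$, and likewise for $b$), then integrating. I would simply cite \cite{bbl}, \cite{brasco-franzina}, or \cite{diaz-saa} for this pointwise inequality rather than reprove it, and concentrate the written proof on the functional-analytic justification of the differentiation and integration by parts.
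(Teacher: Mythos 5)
Your overall strategy is exactly the one the paper has in mind: the paper states this lemma without proof, citing D\'iaz--Sa\'a and the hidden convexity of $w\mapsto\int_\Omega|\nabla w^{1/p}|^p\,dx$ remarked by Benguria--Brezis--Lieb, and your reduction to the convexity of $\Phi(t)=\int_\Omega|\nabla(t\,w_1^p+(1-t)\,w_2^p)^{1/p}|^p\,dx$ together with the identification of $\Phi'(0^+)$ and $\Phi'(1^-)$ via integration by parts is precisely that argument. The technical points you isolate (chain rule for $w^{1/p}$, admissibility of $(w_1^p-w_2^p)/w_i^{p-1}$ as a test function, joint convexity of $(x,\xi)\mapsto|\xi|^p/x^{p-1}$) are the right ones, and the hypotheses of the lemma are tailored to make them work.

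There is, however, a sign error in your formula for $\Phi'(0^+)$ that, taken literally, destroys the conclusion. Since $w(t)=t\,w_1^p+(1-t)\,w_2^p$ has $\tfrac{d}{dt}w=w_1^p-w_2^p$ \emph{independently of $t$}, the same computation that gives
$\Phi'(1^-)=c_p\int_\Omega\frac{-\Delta_p w_1}{w_1^{p-1}}(w_1^p-w_2^p)\,dx$ gives, with the \emph{same} sign,
$\Phi'(0^+)=c_p\int_\Omega\frac{-\Delta_p w_2}{w_2^{p-1}}(w_1^p-w_2^p)\,dx$; there is no sign flip at the base point $t=0$, and your parenthetical justification for one is incorrect. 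With your stated (negative) $\Phi'(0^+)$, the difference $\Phi'(1^-)-\Phi'(0^+)$ would produce the \emph{sum} $\frac{-\Delta_p w_1}{w_1^{p-1}}+\frac{-\Delta_p w_2}{w_2^{p-1}}$ in the integrand rather than the difference appearing in \eqref{monotonicity}. The correct (unflipped) formulas are what make $\Phi'(1^-)\ge\Phi'(0^+)$ coincide with \eqref{monotonicity}. Separately, your expression for $\Phi'(t)$ drops the factor $p$ coming from differentiating $|\cdot|^p$; this only affects the harmless positive constant $c_p$ and not the sign, but it should be fixed for consistency. With these two corrections the proof is complete and matches the cited argument.
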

 
We also recall the definition of the first eigenvalue of the $p$-laplacian
\[
\lambda_{1,p}:=\inf \left\{\int_\Omega \vert\nabla v\vert^p\,dx \,:\, v\in W^{1,p}_0(\Omega)\,,\,\int_\Omega\vert v\vert^p\,dx=1 \right\}.
\]

The next Proposition is a Brezis-Oswald \cite{brezis-oswald} type of result. The proof follows D\'iaz and Sa\'a \cite{diaz-saa} but here we deal with the case when $f(t)/t^{p-1}$ is only monotone and not (as assumed in \cite{diaz-saa}) \emph{strictly monotone}. Notice that it holds, more generally, when $\Omega$ is connected but not necessarily convex.

\begin{proposition}
\label{BO}
Let $\Omega\subseteq \R^N$ be connected with $C^2$ boundary,  and let $f\in C^0(\R_+, \R)$ be such that $t\mapsto f(t)/t^{p-1}$ is non-increasing on $(0, +\infty)$. If $u\in W^{1,p}_0(\Omega)\cap C^{1, \alpha}(\overline{\Omega})$ solves \eqref{eq:p-laplace}, then, either  $u$ is a $\lambda_{1,p}$-eigenfunction, or 
\begin{equation}
\label{basicf}
\lim_{t\to +\infty} \frac{f(t)}{t^{p-1}}<\lambda_{1,p}<\lim_{t\to 0^+} \frac{f(t)}{t^{p-1}}
\end{equation}
and
$u$ is the unique solution of \eqref{eq:p-laplace}, which also minimizes $J$ on $W^{1,p}_0(\Omega)$ where, in the definition of $J$, $f$ is evenly extended on $\R$. 
\end{proposition}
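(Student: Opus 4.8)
The plan is to adapt the Díaz–Sáa argument, which is based on the monotonicity estimate in Lemma \ref{lem:monotonicity}, to the non-strictly-monotone case. First I would dispose of the degenerate case where $u$ is a first eigenfunction: if $u$ solves \eqref{eq:p-laplace} and is also a $\lambda_{1,p}$-eigenfunction, there is nothing to prove, so assume it is not. Next I would establish the two strict inequalities in \eqref{basicf}. The inequality $\lim_{t\to 0^+} f(t)/t^{p-1}>\lambda_{1,p}$ is obtained by testing the equation for $u$ against $u$ itself: dividing $-\Delta_p u=f(u)$ in weak form gives $\int_\Omega |\nabla u|^p = \int_\Omega f(u)\, u = \int_\Omega \frac{f(u)}{u^{p-1}}u^p$; since $f(t)/t^{p-1}$ is non-increasing, this ratio is bounded by $\lim_{t\to 0^+} f(t)/t^{p-1}$, and combined with the variational characterisation of $\lambda_{1,p}$ one gets $\lambda_{1,p} \le \lim_{t\to 0^+} f(t)/t^{p-1}$; equality would force $u$ to be, up to normalisation, a minimiser of the Rayleigh quotient, hence a $\lambda_{1,p}$-eigenfunction, excluded. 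The other inequality $\lim_{t\to +\infty} f(t)/t^{p-1}<\lambda_{1,p}$ follows similarly by testing against the positive first eigenfunction $\phi_1$ (after checking enough integrability/regularity to justify the pairing, using $u\in C^{1,\alpha}(\overline\Omega)$ and the $C^2$ boundary): from $\int_\Omega |\nabla u|^{p-2}\nabla u\cdot\nabla \phi_1 = \int_\Omega f(u)\phi_1$ and the analogous identity with the roles swapped (or a Picone-type inequality), one derives that $\lim_{t\to+\infty} f(t)/t^{p-1}$ cannot reach $\lambda_{1,p}$ without again forcing $u$ to be an eigenfunction.

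For uniqueness, suppose $u_1, u_2$ are two positive solutions in $W^{1,p}_0(\Omega)\cap C^{1,\alpha}(\overline\Omega)$. The key technical point is to verify the hypotheses of Lemma \ref{lem:monotonicity} with $w_i=u_i$: positivity in $\Omega$ is given, $\Delta_p u_i = -f(u_i)\in L^\infty$ by the growth bound \eqref{growthf} and boundedness of $u_i$, and $u_1-u_2\in W^{1,p}_0(\Omega)$; the delicate requirement is $u_1/u_2,\, u_2/u_1\in L^\infty(\Omega)$, which follows from the Hopf lemma (the $C^2$ boundary and $C^{1,\alpha}$ regularity give $\partial u_i/\partial n>0$ on $\partial\Omega$, so each $u_i$ is comparable to the distance function near the boundary, hence comparable to each other). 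Then \eqref{monotonicity} gives
\[
0\le \int_\Omega\left(\frac{f(u_1)}{u_1^{p-1}}-\frac{f(u_2)}{u_2^{p-1}}\right)(u_1^p-u_2^p)\,dx.
\]
Since $t\mapsto f(t)/t^{p-1}$ is non-increasing and $t\mapsto t^p$ is increasing, the integrand is pointwise $\le 0$, so the integral is $\le 0$; therefore it is identically zero and the integrand vanishes a.e. This means that wherever $u_1\ne u_2$, the ratio $f(t)/t^{p-1}$ takes the same value at $u_1(x)$ and $u_2(x)$, i.e. $f/t^{p-1}$ is locally constant on the relevant range. The main obstacle is precisely here: squeezing uniqueness out of this degenerate equality case, where strict monotonicity is unavailable. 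I would argue as follows: let $A=\{x: u_1(x)\ne u_2(x)\}$; on $A$ the function $f(t)/t^{p-1}$ is constant, say equal to some $\mu$, on the whole interval between $u_1(x)$ and $u_2(x)$; but then on $A$ both $u_i$ solve $-\Delta_p u = \mu\, u^{p-1}$, and a connectedness/continuation argument (using that $f/t^{p-1}$ is monotone, so the set where it equals $\mu$ is an interval, and using unique continuation or the strong maximum principle on the equation $-\Delta_p(u_1)-\mu u_1^{p-1}=0$) forces $\mu=\lambda_{1,p}$ and $u_1, u_2$ proportional eigenfunctions on $A$; patching with $\partial A$ where $u_1=u_2$ and using that solutions are determined, one concludes $A=\emptyset$ unless $u$ is an eigenfunction, which is the excluded case.

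Finally, to see that $u$ minimises $J$ on $W^{1,p}_0(\Omega)$: under \eqref{basicf} the functional $J$ (with $f$ evenly extended, $F$ odd) is coercive — because $\limsup_{t\to+\infty} f(t)/t^{p-1}<\lambda_{1,p}$ gives $F(t)\le \frac{\lambda_{1,p}-\eps}{p}t^p + C$ for some $\eps>0$, so $J(v)\ge \frac{\eps}{p\lambda_{1,p}}\int_\Omega|\nabla v|^p - C|\Omega|$ — and weakly lower semicontinuous, hence attains a minimum; any minimiser is a nonnegative (by replacing $v$ with $|v|$) solution of \eqref{eq:p-laplace}, and if it vanishes somewhere it vanishes identically, which is incompatible with $J<0$ attained near $0$ using $\liminf_{t\to0^+}f(t)/t^{p-1}>\lambda_{1,p}$ (test $J$ on $\tau\phi_1$ for small $\tau>0$). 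So the minimiser is a positive solution, and by the uniqueness just proved it coincides with $u$.
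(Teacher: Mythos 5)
Your treatment of \eqref{basicf} and of the coercivity/minimisation of $J$ matches the paper's (for the upper bound $\mu_\infty<\lambda_{1,p}$ the paper makes your sketch precise by applying Lemma \ref{lem:monotonicity} to $w_1=k\varphi$ and $w_2=u$, with $k$ large enough that $k\varphi>u$ via Hopf's lemma), and your setup for uniqueness via Lemma \ref{lem:monotonicity} is also the same. The genuine gap is exactly where you write ``the main obstacle is precisely here''. After \eqref{monotonicity} forces the integrand to vanish, all you know is that for each $x$ with $u_1(x)\ne u_2(x)$ the ratio $g(t)=f(t)/t^{p-1}$ is constant on the interval between $u_1(x)$ and $u_2(x)$; it does \emph{not} follow that $g$ equals a single constant $\mu$ on all of $A=\{u_1\ne u_2\}$ (the intervals vary with $x$ and need not overlap), and even granting one $\mu$, there is no reason $u_1,u_2$ should be first eigenfunctions on $A$: this is a proper open subset on whose boundary $u_1=u_2\ne 0$, so no Dirichlet eigenvalue problem is in sight. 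Moreover, unique continuation is not an available tool for the $p$-Laplacian when $p\ne 2$, so the continuation/patching step cannot be carried out as described.

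The paper closes this gap with two ingredients your proposal lacks. First, it integrates the pointwise Picone inequality \eqref{picone} with $v=\bar u$, $w=u$; combined with the identity $f(u)/u^{p-1}=f(\bar u)/\bar u^{p-1}$ (which holds everywhere in $\Omega$, trivially so where the two solutions agree), the resulting chain of integral inequalities collapses to an equality, forcing equality in \eqref{picone} throughout the connected set $\Omega$ and hence $u=k\,\bar u$ \emph{globally} for some constant $k>0$. Second, if $k>1$, then $g(u(x))=g(u(x)/k)$ for all $x$, and evaluating at points $x_n$ where $u(x_n)=\|u\|_\infty/k^n$ yields $g(\|u\|_\infty)=g(\|u\|_\infty/k^n)$ for every $n$; since $g$ is non-increasing it must then be constant on $(0,\|u\|_\infty)$, i.e.\ $f(t)=\mu\,t^{p-1}$ on the range of $u$, making $u$ an eigenfunction --- the excluded case (and symmetrically for $k<1$). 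This Picone-plus-scaling argument is the substitute for strict monotonicity of $f(t)/t^{p-1}$, and it is what you would need to add to make your uniqueness step rigorous.
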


\begin{proof}

Suppose that $u$ is not a $\lambda_{1,p}$-eigenfunction. Then, being positive in $\Omega$, it is not an eigenfunction at all, so that we may assume that
\begin{equation}
\label{pa}
\text{$f$ is not of the form $k\, t^{p-1}$ on $[0, \|u\|_\infty]$, }
\end{equation}
By \cite[Theorem 2]{diaz-saa} the solvability of \eqref{eq:p-laplace} implies that
\begin{equation}
\label{p1}
\lim_{t\to +\infty} \frac{f(t)}{t^{p-1}}=:\mu_\infty\le\lambda_{1, p}.
\end{equation}
Indeed, observe that by the monotonicity assumption on $t\mapsto f(t)/t^{p-1}$, it holds
\begin{equation}
\label{pl}
\frac{f(u)}{u^{p-1}}\ge \frac{f(\|u\|_\infty)}{\|u\|_\infty^{p-1}}\ge \mu_\infty.
\end{equation}
Let   $\varphi\in C^{1, \alpha}(\overline{\Omega)}$ be a positive $\lambda_{1,p}$-eigenfunction. Any positive multiple of $\varphi$ is still an eigenfunction so that, using  the Hopf Lemma and $C^{1, \alpha}(\overline{\Omega})$ of both $u$ and $\varphi$,   we can choose $k>0$ such that  $ k\, \varphi>u$ on $\Omega$. Notice that all the assumption of Lemma \ref{lem:monotonicity} hold for $w_1= k\, \varphi$ and $w_2=u$ on $\Omega$, so that by \eqref{monotonicity} and  \eqref{pl}, we obtain 
\[
\begin{split}
0&\leq\int_{\Omega}\left(\frac{-\Delta_p (k\, \varphi)}{(k\, \varphi)^{p-1}}-\frac{-\Delta_p u}{u^{p-1}} \right)\big((k\, \varphi)^p-u^p\big)\, dx=\int_{\Omega}\left(\lambda_{1, p}-\frac{f(u)}{u^{p-1}}  \right)\big(k^p\, \varphi^p-u^p\big)\,dx\\
&\le \big(\lambda_{1, p}-\mu_\infty\big) \int_{\Omega}\big(k^p\, \varphi^p-u^p\big)\,dx
\end{split}
\]
giving \eqref{p1} by the positivity of the integral.  By the previous chain of inequalities,  $\mu_\infty=\lambda_{1,p}$ implies that $f(u)=\lambda_{1,p} u^p$ on $\Omega$, contradicting \eqref{pa}, so that the inequality in \eqref{pl} is strict.
Consider now 
\[
\mu_0:=\lim_{t\to0^+}\frac{f(t)}{t^{p-1}}.
\] 
By the properties of the first eigenvalue and the monotonicity of $t\mapsto f(t)/t^{p-1}$, there holds
\[
\lambda_{1,p}\int_\Omega u^p\,dx \leq\int_\Omega\vert\nabla u\vert^p\,dx\leq\int_\Omega f(u)\, u\,dx =\int_\Omega\frac{f(u)}{u^{p-1}}\, u^p\,dx\leq\mu_0\int_\Omega u^p\,dx.
\]
The latter chain of inequalities ensures that $\lambda_{1, p}\le \mu_0$ and that equality holds if and only if $f(u)=\mu_0\, u^p$, again contradicting \eqref{pa} and proving  that $\lambda_{1, p}<\mu_0$.

To prove the assertion on $J$, observe that the first inequality in \eqref{basicf} ensures by standard methods that $J$ is coercive on $W^{1,p}_0(\Omega)$ and possesses a minimum $\bar u\in W^{1,p}_0(\Omega)\cap C^{1,\alpha}(\overline{\Omega})$. Being $F$ odd, it holds $J(|v|)\le J(v)$ for any $v\in W^{1,p}_0(\Omega)$, so that we can assume that $\bar u\ge 0$.  Moreover, again by standard methods, the second inequality in \eqref{basicf} implies
\[
\inf_{W^{1,p}_0(\Omega)} J<0.
\] 
We conclude that $\bar u$ is nontrivial, and therefore strictly positive by the strong minimum principle. 

It remains to show that $u=\bar u$. Applying again Lemma  \ref{lem:monotonicity} we have
\[
0\leq \int_{\Omega}\left(\frac{f(u)}{u^{p-1}}- \frac{f(\overline{u})}{\overline{u}^{p-1}} \right)(u^p-\overline{u}^p)\, dx.
\]
By the monotonicity assumption, the two factors in the integrand have opposite sign, so that we infer
\begin{equation}
\label{peq}
\frac{f(u)}{u^{p-1}}=\frac{f(\bar u)}{\bar u^{p-1}}\qquad \text{in $\Omega$}
\end{equation}

 Next recall that the pointwise Picone inequality \cite{brasco-franzina}
\begin{equation}
\label{picone}
|\nabla v|^{p-2}\nabla v\cdot \nabla \frac{w^p}{v^{p-1}}\le |\nabla w|^p, \qquad v, w>0
\end{equation}
becomes an equality in a connected set if and only if $v=k\, w$. Applying the latter for $v=\bar u$ and $w=u$ and integrating, we get
\[
\int_\Omega \frac{f(\bar u)}{\bar u^{p-1}}\, u^p\, dx=\int_\Omega|\nabla \bar u|^{p-2}\nabla \bar u\cdot\nabla \frac{u^p}{\bar u^{p-1}}\le \int_\Omega |\nabla u|^p\le \int_\Omega f(u)\, u\, dx=\int_\Omega \frac{f(u)}{u^{p-1}}\, u^{p}\, dx.
\]
Since $u$ is positive, \eqref{peq}  implies that equality is attained in \eqref{picone}   for $ v=\bar u$ and $w=u$ everywhere in $\Omega$. Hence
\be\label{eq:prop}
u=k\, \bar u \,,\qquad\mbox{for some $k>0$.}
\ee
Let $g(t):=f(t)/t^{p-1}$ and assume that $k>1$ in \eqref{eq:prop}. By continuity, for any $n\in \N$ there exists $x_n\in \Omega$ such that 
\[
u(x_{n})=\| u \|_\infty/k^n
\]
and thus \eqref{peq} and \eqref{eq:prop} give
\[
g(\|u\|_\infty)=g(u(x_0))=g(\bar u(x_0))=g(u(x_0)/k)=g(u(x_1))=\dots=g(\|u\|_\infty/k^n)
\]
for any $n\ge 1$. Therefore $g$, being non-increasing, is constant on $(0, \|u\|_\infty)$, contradicting \eqref{pa}. Similarly, if $0<k<1$, we infer that $g$ is constant on 
\[
(0, \|\bar u\|_\infty)=(0, \|u\|_\infty/k)\supseteq (0, \|u\|_\infty), 
\] 
again contrary to \eqref{pa}.
Therefore $k=1$ and $u=\bar u$, as claimed.
 \end{proof}

\section{Proof of the main result}\label{sec4}
Our proof consists in a two-step approximation. Following \cite{Saka}, we first show that it suffices to deal with strictly convex domains. Then, under such assumption, we introduce a regularized problem for which we can prove the claim of Theorem \ref{Mth}. The solutions of the regularized problem converge uniformly to that of the original problem, thus proving Theorem \ref{Mth}. 

In light of the result of \cite{Saka}, we will suppose henceforth that $u$ is not a $\lambda_{1,p}$-eigenfunction, so that inequalities \eqref{basicf} are in place, ensuring that the functional $J$ in \eqref{defJ}
has a unique positive minimiser on $W^{1,p}_0(\Omega)$ by Proposition \ref{BO} and Lemma \ref{lemma21}. Notice that this holds for any smooth $\Omega$, justifying some of the following arguments.
\medskip

\subsection{Restricting to strictly convex domains}
We start by showing that there is no loss of generality considering strictly convex domains. To this aim, consider a sequence of strongly convex smooth domains $(\Omega_k)_{k\in\mathbb{N}}$ such that 
\[
\overline{\Omega}_k\subseteq\Omega_{k+1}\qquad\forall k\in\mathbb{N}\,,\qquad \bigcup_{k\in\mathbb{N}}\Omega_k=\Omega\,. 
\]
Let $u_k\in W^{1,p}_0(\Omega_k)$ be the unique solution to \eqref{eq:p-laplace} in $\Omega_k$, which can be extended to $\Omega$ setting $u_k=0$ on $\Omega\setminus\Omega_k$. 
Observe that $u_k$ is the unique positive minimiser of 
\[
J_k(z)=\int_{\Omega_k}\frac{1}{p}\, \vert\nabla z\vert^p-F(z)\,dx\,,
\]
and by minimality 
\[
J(u_{k+1})=J_{k+1}(u_{k+1})\leq J_{k+1}(u_k)=J(u_k)\,,
\]
and then the coercivity of $J$ ensures that $(u_k)_{k\in\mathbb{N}}$ is bounded in $W^{1,p}_0(\Omega)$. Thus, up to subsequences, we can assume that 
\[
  \begin{split}
  & u_n\rightharpoonup \tilde{u} \quad\mbox{weakly in $W^{1,p}_0(\Omega)$} \\
 & u_n\to \tilde{u}\quad \mbox{strongly in $L^s(\Omega)$, $\forall s\in[1,p]$} \\
 & u_n\to \tilde{u}\quad \mbox{a.e.}
  \end{split}
\]
  for some $\tilde{u}\in W^{1,p}_0(\Omega)$.
The continuity of the Nemitski operator associated to $F$  gives
\[
  \lim_{n\to\infty}\int_\Omega F(u_n)\,dx =\int_\Omega F(\tilde{u})\,dx\,.
 \]
  
  By semicontinuity of $v\mapsto \| \nabla v\|_p^p$ and by the minimising properties of $u_k$, we thus conclude that $\tilde{u}$ minimises $J$ and therefore, again by Proposition \ref{BO},  $\tilde{u}=u$. Then, assume that we proved the claim of Theorem \ref{Mth}   for $u_k$, for all $k\in\mathbb{N}$, that is, $\varphi(u_k)$ is concave on $\Omega_k$. The convexity functions $c_k$ of $\varphi(u_k)$ are therefore non-positive on $\Omega_k\times \Omega_k\times[0, 1]$ and converge pointwise a.\,e.\, on $\Omega\times\Omega\times[0, 1]$ to the convexity function $c$ of $\varphi(u)$. By the continuity of $u$, we thus infer that $c\le 0$ on $\Omega_{k_0}\times\Omega_{k_0}\times[0, 1]$ for any $k_0\in \N$, thus $\varphi(u)$ is concave on $\Omega$. Therefore we will suppose in the following that $\Omega$ is strictly convex.

\subsection{A regularized problem}

Let us now turn to the approximation procedure. We introduce a regularized problem for which we can prove the claim of Theorem \ref{Mth}. In the following we can assume that $u$ is not a $\lambda_{1, p}$ eigenfunction, so that \eqref{basicf} holds true.

Given $\eps>0$, consider the variational problem \be\label{eq:eps-min}
\inf_{v\in W^{1,p}_0(\Omega)}I_\eps(v)=:\lambda_\eps\,.
\ee
where
\[
I_\eps(v):=\frac{1}{p}\int_\Omega(\eps\, (G(v)^2)^{1/p}+\vert \nabla v\vert^2)^{p/2}\,dx-\int_\Omega F(v)\,dx\,
\]
for some odd $G\in C^1(\R)$ that will later be chosen. In the following we will let $g:=G'$.
\begin{lemma}\label{existregular}
Suppose that $f\in C^0(\R)$ is even, obeys \eqref{basicf} and $t\mapsto f(t)/t^{p-1}$ is non-increasing on $\R_+$. Then, for any sufficiently small $\eps>0$, problem \eqref{eq:eps-min} admits a non-negative, nontrivial solution $u_\eps\in W^{1,p}_0(\Omega)$
such that, for a fixed $\alpha>0$ $u_\eps\to u$ in $C^{1, \alpha}(\Omega)$
as $\eps\to 0$, where $u$ is the unique positive solution to \eqref{eq:p-laplace}.
\end{lemma}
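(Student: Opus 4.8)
The plan is to prove the three assertions --- existence of $u_\eps$, its sign and nontriviality, and the convergence $u_\eps\to u$ --- in this order, obtaining the last one from the uniqueness granted by Proposition~\ref{BO}. First I would check that $I_\eps$ is well defined, coercive and weakly lower semicontinuous on $W^{1,p}_0(\Omega)$. Coercivity follows from the pointwise bound $(\eps\,(G(v)^2)^{1/p}+|\nabla v|^2)^{p/2}\ge|\nabla v|^p$, which gives $I_\eps\ge J$ with $J$ as in \eqref{defJ}, together with the coercivity of $J$ furnished by the first inequality in \eqref{basicf}, exactly as in the proof of Proposition~\ref{BO}; moreover $\inf J\le\lambda_\eps\le I_\eps(u)\le I_1(u)$ since $I_\eps(v)$ is nondecreasing in $\eps$, so the ensuing bound for $\|u_\eps\|_{W^{1,p}_0(\Omega)}$ is uniform for $\eps\in(0,1]$. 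Lower semicontinuity holds because $(s,\xi)\mapsto(\eps\,(G(s)^2)^{1/p}+|\xi|^2)^{p/2}$ is continuous and, for every $p>1$, convex in $\xi$, while $v\mapsto\int_\Omega F(v)$ is weakly continuous by \eqref{growthF} and the compact Sobolev embedding. The direct method then produces a minimiser, which we may take nonnegative: replacing it by its modulus leaves the gradient term and $\int_\Omega(G(\cdot)^2)^{1/p}$ unchanged (as $G$ is odd) and does not decrease $\int_\Omega F(\cdot)$ (as $F$ is odd and $\ge 0$ on $\R_+$). Nontriviality for small $\eps$ follows from $I_\eps(u)\to J(u)=\inf J<0$ (the last inequality being established in the proof of Proposition~\ref{BO}), so that $\lambda_\eps\le I_\eps(u)<0=I_\eps(0)$ once $\eps$ is small.

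Writing the Euler--Lagrange equation of $I_\eps$ exhibits $u_\eps$ as a weak solution of a regularised equation whose principal part is of (regularised) $p$-Laplacian type and which carries an extra lower-order term. On this equation I would run the usual bootstrap: a Moser/De~Giorgi argument (using $f(t)\le C(1+t^{p-1})$ from \eqref{growthf}) gives a uniform bound for $\|u_\eps\|_{L^\infty(\Omega)}$, after which De~Giorgi--Nash estimates for equations of $p$-Laplacian type yield a uniform $C^{0,\gamma}_{\mathrm{loc}}(\Omega)$ bound. Extracting a subsequence we get $u_\eps\rightharpoonup\bar u$ in $W^{1,p}_0(\Omega)$, strongly in $L^s(\Omega)$ for $s\in[1,p]$ and a.e. Using $u$ itself as competitor gives $\limsup_\eps\lambda_\eps\le\lim_\eps I_\eps(u)=J(u)=\inf J$, whereas $I_\eps(u_\eps)\ge\frac1p\|\nabla u_\eps\|_p^p-\int_\Omega F(u_\eps)$, weak lower semicontinuity of $v\mapsto\|\nabla v\|_p$, and $\int_\Omega F(u_\eps)\to\int_\Omega F(\bar u)$ give $\liminf_\eps\lambda_\eps\ge J(\bar u)$. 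Hence $\bar u$ minimises $J$; being nonnegative and satisfying $J(\bar u)=\inf J<0$, it is nontrivial, so by Proposition~\ref{BO} (whose hypothesis is checked through Lemma~\ref{lemma21}) $\bar u$ equals the unique positive solution $u$. Since the limit does not depend on the subsequence, the whole family converges.

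It remains to upgrade to $C^{1,\alpha}_{\mathrm{loc}}(\Omega)$. On any $\Omega'\subset\subset\Omega$ one has $\min_{\Omega'}u>0$, so by the $C^{0,\gamma}$ convergence $u_\eps\ge c(\Omega')>0$ on $\Omega'$ for $\eps$ small; there $G(u_\eps)$ and $g(u_\eps)$ are bounded and bounded away from $0$, the regularised equation is uniformly elliptic with continuous coefficients, and its lower-order term is $O(\eps)$, so Schauder-type quasilinear regularity gives uniform $C^{1,\alpha}(\Omega'')$ bounds for $\Omega''\subset\subset\Omega'$; combined with the already identified limit, this yields $u_\eps\to u$ in $C^{1,\alpha}_{\mathrm{loc}}(\Omega)$, as claimed.

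The routine parts are the existence and the variational identification of the limit. The delicate point --- the genuine obstacle --- is making the last step quantitative near $\partial\Omega$, should one need convergence up to the boundary, because there the lower-order term
\[
\tfrac{\eps}{p}\,\big(\eps\,(G(u_\eps)^2)^{1/p}+|\nabla u_\eps|^2\big)^{p/2-1}\,(G(u_\eps)^2)^{1/p-1}\,G(u_\eps)\,g(u_\eps)
\]
degenerates as $u_\eps\to 0$, and the coefficient $(\eps\,(G(u_\eps)^2)^{1/p}+|\nabla u_\eps|^2)^{p/2-1}$ in the principal part is no longer elliptic with $\eps$-independent constants; arranging that these terms stay in a class for which the quasilinear De~Giorgi--Nash--Schauder machinery applies uniformly in $\eps$ is exactly what will force the specific choice of $G$ to be made below.
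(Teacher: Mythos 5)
Your argument for existence, nonnegativity, nontriviality, the uniform $W^{1,p}_0(\Omega)$ bound, and the identification of the weak limit with $u$ via lower semicontinuity and the uniqueness of Proposition~\ref{BO} is essentially the paper's proof, and it is correct (the paper additionally upgrades weak to strong $W^{1,p}_0$ convergence by noting $J(u_\eps)\to J(u)$ and $\int_\Omega F(u_\eps)\,dx\to\int_\Omega F(u)\,dx$, hence norm convergence of the gradients plus uniform convexity, but your route to the limit identification is the same).

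The genuine divergence is in the final regularity step, and there you stop short of what the lemma is used for. You obtain only $C^{1,\alpha}_{\rm loc}(\Omega)$ convergence and explicitly leave open the behaviour near $\partial\Omega$; but the conclusion is invoked later as convergence in $C^{1,\alpha}(\overline{\Omega})$ --- item \emph{(2)} of Lemma~\ref{lemmareg} transfers the Hopf inequality $\partial u/\partial n>0$ on $\partial\Omega$ to $u_\eps$ precisely through uniform $C^1$ closeness up to the boundary, and the whole boundary analysis of Proposition~\ref{propKorevaar} rests on that. The paper closes this step differently: arguing as in \cite[Proposition 6.2]{Saka}, the growth bound \eqref{growthF} together with the uniform gradient bound yields a uniform $L^\infty$ bound, and then the \emph{global} $C^{1,\beta}(\overline{\Omega})$ estimates for degenerate quasilinear equations of $p$-Laplacian type (Lieberman/Tolksdorf theory, which requires only a bounded right-hand side and smooth boundary data, not nondegenerate ellipticity) give an $\eps$-uniform $C^{1,\beta}(\overline{\Omega})$ bound, whence convergence in $C^{1,\beta/2}(\overline{\Omega})$ by Ascoli--Arzel\`a. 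Note also that your interior step slightly misstates the mechanism: on $\Omega'\subset\subset\Omega$ the ellipticity constants of the frozen-coefficient operator in \eqref{eq:eps-eq} are \emph{not} uniform in $\eps$ for $p\neq2$ (the principal part still degenerates or becomes singular at critical points of $u_\eps$ as $\eps\to0$), so it is again the degenerate $C^{1,\beta}$ theory, not uniformly elliptic Schauder theory, that produces $\eps$-independent bounds; your worry that the lower-order term forces the choice of $G$ is also misplaced, since $G$ is chosen later solely to make the transformed equation fit Kennington's structure.
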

\begin{proof}
The equi-coercivity of $I_\eps$ follows from the one of $J$ and the obvious inequality $I_\eps\ge J$. Furthermore, if $u$ is as in the statement, we have by \eqref{basicf}
\[
J(u)=\inf_{W^{1,p}_0(\Omega)} J<0,
\]
so that by continuity $I_\eps(u)<0$ for sufficiently small $\eps>0$. Thus \eqref{eq:eps-min} has a nontrivial solution, which is non-negative by $I_\eps(|v|)\le I_\eps(v)$.
 In order to prove the convergence of $u_\eps$ to $u$, first observe that  by dominated convergence it is readily checked that for any $w\in W^{1,p}_0(\Omega)$.
\be
\label{pconv}
\lim_{\eps\to 0^+} I_\eps(w)=J(w).
\ee
Since 
\[
J(u)\le J(u_\eps)\le I_\eps(u_\eps)= \lambda_\eps\le I_\eps(0) =0
\]
it follows that $u_\eps$ is bounded in $W^{1,p}_0(\Omega)$, uniformly for $0<\eps<1$.  Hence there exists $v\in W^{1,p}_0(\Omega)$ such that, up to subsequences, $u_{\eps_k}\rightharpoonup v$ weakly in $W^{1,p}_0(\Omega)$, as $k\to\infty$. 

From the weak lower semicontinuity of $J$, the minimality of $u_{\eps_k}$ and \eqref{pconv}, we get
\[
J(v)\le \varliminf_k J(u_{\eps_k})\le \varliminf_k I_{\eps_k}(u_{\eps_k})\le \varliminf_k I_{\eps_k} (u)=J(u)
\]
so that $v$ is a minimizer for $J$ as well, forcing $u=v$ by the uniqueness proved in Proposition \ref{BO}.
The previous display forces $J(u_{\eps_k})\to J(u)$ but, as already noted,  
\[
\int_\Omega F(u_{\eps_k})\, dx\to \int_\Omega F(u)\, dx
\]
so that we infer that $\|\nabla u_{\eps_k}\|_p^p\to \|\nabla u\|_p^p$, giving the strong convergence $u_{\eps_k}\to u$ in $W^{1,p}_0(\Omega)$ by uniform convextiy. A standard sub-subsequence argument allow to conclude that $u_\eps\to u$ in $W^{1,p}_0(\Omega)$.  Arguing as in \cite[Proposition 6.2]{Saka}, we infer from \eqref{growthF} and the uniform bound on $\|\nabla u_\eps\|_{p}$ a uniform bound on $\|u_\eps\|_\infty$ for $\eps\in (0, 1)$. Standard regularity theory then ensures that $(u_\eps)$ is uniformly bounded in $C^{1, \beta}(\overline{\Omega})$, $\beta>0$ and thus converges to $u$ in $C^{1, \beta/2}(\overline{\Omega})$ by Ascoli-Arzel\'a theorem.
\end{proof}

We collect next some regularity properties of the minimisers $u_\eps$ just constructed.

\begin{lemma}\label{lemmareg}
Let $u_\eps$ be the solutions constructed in Lemma \ref{existregular} under its assumptions, and suppose that $G>0$ on $\R_+$. 
\begin{enumerate}
\item
Each $u_\eps$ satisfies in $\Omega$ the Euler-Lagrange equation
\be\label{eq:eps-eq}
-\Div\Big(\big(\eps\, G(u_\eps)^{\frac{2}{p}}+\vert\nabla u_\eps\vert^2\big)^{\frac{p-2}{2}}\nabla u_\eps\Big)= f(u_\eps)-\frac{\eps}{p}\, \big(\eps\, G(u_\eps)^{\frac{2}{p}}+\vert\nabla u_\eps\vert^2\big)^{\frac{p-2}{2}}G(u_\eps)^{\frac{2-p}{p}}g(u_\eps).
\ee
\item
For sufficiently small $\eps$, we have 
\[
u_\eps>0\  \text{in $\Omega$},\qquad  \frac{\partial u_\eps}{\partial n}>0 \ \text{on $\partial\Omega$}
\]
where $n$ is the interior normal to $\partial\Omega$.
\item
If $f$ and $g$ are $\alpha$-H\"older continuous, $u_\eps\in C^{2, \alpha}(\Omega)$ and there exists $\eta_0>0$ such that $u_\eps$ is  uniformly  bounded in $C^{2, \alpha}(S_{\eta_0})$, where   
\[
S_\eta:=\{x\in \Omega:\eta/2\le {\rm dist}(x, \partial\Omega)\le 2\, \eta\}
\]
\end{enumerate}
\end{lemma}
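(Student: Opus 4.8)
The three items are essentially standard elliptic-regularity and maximum-principle facts applied to the degenerate equation \eqref{eq:eps-eq}, but the point is that the various bounds must be uniform in $\eps$ near $\partial\Omega$, where the $C^{1,\alpha}$ bound of Lemma \ref{existregular} gives a uniform ellipticity floor. I would organise the proof as follows.

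For item \emph{(1)}, the Euler--Lagrange equation is obtained by differentiating $I_\eps$ along $u_\eps + t\phi$, $\phi\in C^\infty_c(\Omega)$. The only care needed is that the integrand $(\eps\,G(v)^{2/p}+|\nabla v|^2)^{p/2}$ is $C^1$ in $(v,\nabla v)$ once $G\in C^1$ and $G>0$ on $\R_+$ (so that $G(v)^{2/p}$ is $C^1$ even when $p<2$), and that the growth bounds \eqref{growthF}--\eqref{growthf} together with the $W^{1,p}$-bound make differentiation under the integral legitimate; then one rewrites the weak form in divergence form. This step is routine.

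For item \emph{(2)}: by Lemma \ref{existregular}, $u_\eps\to u$ in $C^{1,\alpha}(\overline\Omega)$ and $u$ is the unique positive solution, with $u>0$ in $\Omega$ and, by the Hopf lemma applied to \eqref{eq:p-laplace}, $\partial u/\partial n>0$ on $\partial\Omega$. Since $\partial\Omega$ is compact and $C^2$, $\partial u/\partial n\ge 2c_0>0$ on $\partial\Omega$ for some $c_0$; by $C^1$-convergence the same holds for $u_\eps$ with constant $c_0$ once $\eps$ is small, and this also forces $u_\eps>0$ in a fixed neighbourhood $N$ of $\partial\Omega$. On the compact set $\overline{\Omega\setminus N}$ the limit $u$ is bounded below by a positive constant, so uniform convergence gives $u_\eps>0$ there as well for small $\eps$; combining the two regions yields $u_\eps>0$ in $\Omega$.

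For item \emph{(3)}: on the neighbourhood $N$ where $\partial u/\partial n$ is bounded away from $0$, the $C^1$-convergence gives $|\nabla u_\eps|\ge c_0>0$ on a set $S_{\eta_0}$ of the prescribed form (choosing $\eta_0$ small), uniformly in $\eps$. On $S_{\eta_0}$ the coefficient $(\eps\,G(u_\eps)^{2/p}+|\nabla u_\eps|^2)^{(p-2)/2}$ is therefore trapped between two positive constants independent of $\eps$ (using also the uniform $C^1$-bound from above), so \eqref{eq:eps-eq} is a \emph{uniformly elliptic}, non-degenerate quasilinear equation there, with right-hand side $\alpha$-H\"older in $x$ — the $\eps$-term on the right of \eqref{eq:eps-eq} is $\alpha$-H\"older because $f,g,G$ are and $G>0$ keeps $G^{(2-p)/p}$ smooth. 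Then interior and global (up to the $C^2$ boundary $\partial\Omega$) Schauder estimates for quasilinear equations in non-divergence form give $u_\eps\in C^{2,\alpha}$ near $\partial\Omega$, with a bound depending only on the uniform $C^1$-bound, the ellipticity constants, the H\"older norms of the data, and $\partial\Omega$ — hence uniform in $\eps$ on $S_{\eta_0}$. (Interior $C^{2,\alpha}$-regularity on all of $\Omega$ where $\nabla u_\eps\ne 0$ follows likewise, but without uniformity, which is all item \emph{(3)} claims for the bulk.)

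The main obstacle is item \emph{(3)}: one must extract a genuinely $\eps$-uniform lower bound on $|\nabla u_\eps|$ in a fixed collar of $\partial\Omega$, and this rests entirely on the $C^{1,\alpha}$-convergence furnished by Lemma \ref{existregular} together with the Hopf lemma for the limit problem; once that uniform ellipticity is secured, the Schauder machinery is standard. A secondary technical point is checking that the $\eps$-dependent right-hand side of \eqref{eq:eps-eq} does not spoil the H\"older bounds — this is exactly why $G>0$ on $\R_+$ is assumed, since it makes $G^{(2-p)/p}$ and $G^{2/p}$ smooth functions of $u_\eps$ with $\eps$-uniform H\"older norms on $S_{\eta_0}$.
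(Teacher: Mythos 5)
Your treatment of items \emph{(1)}, \emph{(2)} and of the boundary layer in \emph{(3)} matches the paper's argument: Euler--Lagrange by direct differentiation, Hopf's lemma for $u$ plus $C^{1,\alpha}$-convergence for \emph{(2)}, and the uniform gradient floor $|\nabla u_\eps|\ge c_0$ on a collar giving $\eps$-uniform ellipticity and Schauder bounds on $S_{\eta_0}$.

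There is, however, a genuine gap in your parenthetical handling of the interior part of \emph{(3)}. You claim $C^{2,\alpha}$-regularity in the bulk only \emph{where $\nabla u_\eps\ne 0$}, and assert that this is all the lemma claims; but the statement is $u_\eps\in C^{2,\alpha}(\Omega)$, i.e.\ on all of $\Omega$ including critical points of $u_\eps$, and this is what is actually needed later (Proposition \ref{propKennington} requires $v_\eps\in C^2(\Omega)$, and the convexity function is controlled on all of $\Omega_\delta$, not only off the critical set). The missing observation is the second half of the paper's dichotomy: away from $\partial\Omega$ one has $u_\eps\ge c>0$ (by item \emph{(2)} and uniform convergence), hence $G(u_\eps)\ge c'>0$ since $G>0$ on $\R_+$, so the quantity $\eps\,G(u_\eps)^{2/p}+|\nabla u_\eps|^2$ is bounded away from zero \emph{even where $\nabla u_\eps=0$}. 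The operator is therefore non-degenerate elliptic throughout the interior (with constants depending on $\eps$, which is harmless since no uniformity is claimed there), and Schauder theory applies at critical points as well. This is the principal purpose of the regularising term $\eps\,G(u_\eps)^{2/p}$ and of the hypothesis $G>0$ on $\R_+$, which you attribute only to the smoothness of $G^{(2-p)/p}$ and $G^{2/p}$. Once this is added, your proof coincides with the paper's.
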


\begin{proof}
The first assertion is a simple calculation. Recall that $u$ itself satisfies 
\[
u>0 \ \text{in $\Omega$},\qquad  \frac{\partial u}{\partial n}>0 \ \text{on $\partial\Omega$}
\]
by the Hopf lemma, so that the second assertion follows from the convergence $u_\eps\to u$ in $C^{1,\alpha}(\overline{\Omega})$.
Regarding the last regularity property, it suffice to observe that the operator on the left hand side of \eqref{eq:eps-eq} is uniformly elliptic with H\"older continuous coefficients since, from the previous point, there exists $c_0>0$ such that either $|\nabla u_\eps|\ge c_0$ near $\partial\Omega$, or $F(u)\ge c_0$ away from $\partial\Omega$. The right hand side of \eqref{eq:eps-eq} is H\"older continuous away from $\partial\Omega$, so that the nonlinear  Schauder estimates give the claims in {\em (3)}.
\end{proof}

 Theorems  \ref{Mth} and \ref{Sth} will now follow from the next statement, through a simple approximation argument on the concavity function.
 \begin{proposition}\label{final}
 Under the assumptions of Theorem \ref{Mth}, let $\varphi$ be defined in \eqref{defvarphi}. For any sufficiently small $\delta>0$, there exist $\eps_\delta$ such that if $\eps<\eps_\delta$ then $v=\varphi(u_\eps)$ is concave on 
\[
\Omega_\delta=\{x\in \Omega:{\rm dist}(x, \partial\Omega)>\delta\}.
\]
The same statement holds for $v=\log u$.
\end{proposition}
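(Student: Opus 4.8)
\textbf{Proof proposal for Proposition \ref{final}.}

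The plan is to run the Korevaar--Kennington machinery on the regularized equation \eqref{eq:eps-eq} after passing to the transformed function $v_\eps = \varphi(u_\eps)$, using Propositions \ref{propKennington} and \ref{propKorevaar} in tandem. The first step is to compute, exactly as in the heuristic derivation of \eqref{eq:newveq}, the equation satisfied by $v_\eps$. With $\psi = \varphi^{-1}$ and the identity $\psi' = F^{1/p}(\psi)$, and choosing in the regularization precisely $G = F^{1/p}$ (so that $g = G' = f/(p F^{(p-1)/p})$), the Euler--Lagrange equation \eqref{eq:eps-eq} transforms into an equation of the form
\[
\sum_{i,j} a_{ij}^\eps(Dv_\eps)\, \partial_{ij} v_\eps + b_\eps(v_\eps, Dv_\eps) = 0,
\]
where $(a_{ij}^\eps)$ is uniformly elliptic (this uses $\eps G(u_\eps)^{2/p} > 0$, which regularizes the degenerate $p$-Laplacian away from $\{\nabla u_\eps = 0\}$) with Hölder coefficients by Lemma \ref{lemmareg}(3), and $b_\eps$ is built out of the ratio $\psi''/\psi'$ together with $\eps$-dependent terms that vanish as $\eps \to 0$. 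The point of the specific choice \eqref{eq:condition} is that the leading (i.e.\ $\eps$-independent) part of $b_\eps$ depends on $v_\eps$ only through $\psi''(v_\eps)/\psi'(v_\eps)$, multiplied by a positive function of $|\nabla v_\eps|$.

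The second step is to verify hypotheses \emph{(1)} and \emph{(2)} of Proposition \ref{propKennington} for $b_\eps$. One shows that assumptions \emph{(1)} and \emph{(2)} of Theorem \ref{Mth} --- i.e.\ concavity of $F^{1/p}$ and convexity of $F/f$ on $(0,M)$ --- translate, via the relations $\psi' = F^{1/p}(\psi)$ and $\psi'' = \frac{1}{p} (F^{1/p})'(\psi)\, f(\psi)/F^{(p-1)/p}(\psi)$ evaluated along $v = \varphi(t)$, into exactly the monotonicity and harmonic-concavity of $t \mapsto b(t,z)$ required by Proposition \ref{propKennington}; this is the content of the claim in the "Outline of the proof" that Kennington's conditions "are equivalent to \emph{(1)} and \emph{(2)} of Theorem \ref{Mth}". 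The $\eps$-dependent corrections must be shown to preserve (or only mildly perturb) these sign conditions for $\eps$ small; here one uses that $u_\eps$ is uniformly bounded in $C^1(\overline\Omega)$ and bounded away from $0$ on $\Omega_\delta$ (by Lemma \ref{lemmareg}(2) and the convergence $u_\eps \to u$), so the correction terms are uniformly small relative to the main term on the compact set where they matter. Consequently the convexity function $c_\eps$ of $v_\eps$ cannot attain a positive interior maximum in $\Omega \times \Omega$.

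The third step handles the boundary: one applies Proposition \ref{propKorevaar} to rule out a positive maximum of $c_\eps$ on $\partial(\Omega_\delta \times \Omega_\delta)$. This requires checking that $\varphi$ satisfies \eqref{assKor} near $0$ --- which is exactly Lemma \ref{lemmavarphi} --- and that $u_\eps$ satisfies \eqref{assu}, which is Lemma \ref{lemmareg}(2); the conclusions \eqref{k1}--\eqref{k2} then hold for $v_\eps$ on $S_\delta$ uniformly for small $\eps$, using the uniform $C^{2,\alpha}(S_{\eta_0})$ bound of Lemma \ref{lemmareg}(3) and the fact that $\varphi$ is smooth and strictly concave on compact subsets of $(0, M)$ (note $\|u_\eps\|_\infty \le \|u\|_\infty + o(1) < M$ follows from the a priori bound). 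Combining steps two and three, $c_\eps \le 0$ on $\Omega_\delta \times \Omega_\delta$ for $\eps < \eps_\delta$, i.e.\ $v_\eps = \varphi(u_\eps)$ is concave on $\Omega_\delta$. The case $v = \log u$ is identical: $\log$ satisfies \eqref{assKor}, assumption \emph{(1)} of Theorem \ref{Sth} gives the monotonicity and assumption \emph{(2)} the harmonic concavity of the corresponding lower-order term (with $G$ in the regularization replaced by a suitable power so that the transformation $\log u_\eps$ yields an equation of the same structure, or by working directly with $\log u$ since uniqueness and $C^{2,\alpha}$ interior regularity of $u$ itself are available here).

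I expect the main obstacle to be step two's bookkeeping of the $\eps$-dependent lower-order terms: one must show that the extra summands in $b_\eps$ coming from $\eps G(u_\eps)^{2/p}$ in the ellipticity and from the right-hand side of \eqref{eq:eps-eq} do not destroy the sign of $\partial_t b_\eps$ and the convexity of $t \mapsto 1/b_\eps(t,z)$, \emph{uniformly} on $\Omega_\delta$ as $\eps \to 0$. The delicate point is that the main term $\psi''/\psi'$ degenerates near $\partial\Omega_\delta$ (where $v_\eps \to -\infty$ if $\varphi$ is unbounded, or where $|\nabla v_\eps| \to \infty$), so the smallness of the corrections must be measured against a possibly small or large main term; this is precisely why the domain is cut at level $\delta$ and why the uniform estimates of Lemma \ref{lemmareg}, together with Lemma \ref{lemmavarphi}'s control of $\varphi$ near $0$, are indispensable.
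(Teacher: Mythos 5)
Your plan follows the paper's proof almost exactly: transform the regularized Euler--Lagrange equation \eqref{eq:eps-eq} by $v_\eps=\varphi(u_\eps)$, verify Kennington's hypotheses for the resulting zeroth-order term to exclude an interior positive maximum of the convexity function, and use Korevaar's boundary lemma (via Lemma \ref{lemmavarphi}, Lemma \ref{lemmareg} and the uniform $C^{2,\alpha}(S_{\eta_0})$ bounds, transferred from $u$ to $u_\eps$ by convergence) to exclude a positive maximum on $\partial(\Omega_\delta\times\Omega_\delta)$. However, three details as written would not go through. First, the correct choice is $G=F$, not $G=F^{1/p}$: since the regularization contains $\eps\,G(u)^{2/p}$, the factorization $\eps\,G(u_\eps)^{2/p}+|\nabla u_\eps|^2=\psi'(v_\eps)^2(\eps+|\nabla v_\eps|^2)$ requires $G(u)=\psi'(\varphi(u))^p=F(u)$; with your choice the transformed equation does not close. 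Second, your justification $\|u_\eps\|_\infty\le\|u\|_\infty+o(1)<M$ for the inclusion $\psi(v_\eps(\Omega))\subseteq[0,M]$ is unfounded (nothing excludes $\|u\|_\infty=M$, and no such strict inequality is proved anywhere); the paper instead truncates $G$ to be constant on $[M,+\infty)$ and derives $\sup_\Omega u_\eps\le M$ from a comparison argument on the set $\{u_\eps>M\}$, where the equation reduces to a homogeneous monotone divergence-form equation.

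Third, the ``main obstacle'' you anticipate --- uniform control of $\eps$-dependent corrections to the monotonicity and harmonic concavity of $t\mapsto b_\eps(t,z)$ --- does not arise: the regularization is designed precisely so that, after inserting $f(\psi)/(\psi')^{p-1}=p\,\psi''/\psi'$, the transformed equation reads
\[
-\Div\Big(\big(\eps+|\nabla v_\eps|^2\big)^{\frac{p-2}{2}}\nabla v_\eps\Big)=\frac{\psi''(v_\eps)}{\psi'(v_\eps)}\,\Big[p+\big((p-1)|\nabla v_\eps|^2-\eps\big)\big(\eps+|\nabla v_\eps|^2\big)^{\frac{p-2}{2}}\Big],
\]
so all $\eps$-dependence sits in the gradient factor and the $t$-dependence is exactly $\psi''/\psi'$; hypotheses \emph{(1)}--\emph{(2)} of Theorem \ref{Mth} then give the required monotonicity and the convexity of $\psi'/\psi''$ \emph{exactly} on $v_\eps(\Omega)$, and the only smallness condition is $\eps<p^{2/p}$, which keeps the bracketed gradient factor positive. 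Finally, for the $\log$-concavity statement your fallback of ``working directly with $\log u$'' is not available, since $u$ is in general only $C^{1,\alpha}$ where $\nabla u$ vanishes; one must keep the approximation and take $G(t)=t^p$, i.e. $\psi(s)=e^s$, as in the paper.
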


 \subsection{Non-positivity at the boundary}
 
From the arguments given in the proof of Lemma \ref{lemmareg} we can fix $\delta_0$ so that the solution  $u$ of \eqref{eq:p-laplace} belongs to $C^2(\Omega\setminus\Omega_{\delta_0})$. Moreover, \eqref{assu} hold true for $u$ and, thanks to Lemma \ref{lemmavarphi} (and a direct computation in the case of $\log t$), Proposition \ref{propKorevaar} applies for both the transformations $\varphi$ defined in \eqref{defvarphi} an $\tilde{\varphi}(t)=\log t$. In the next argument, with $v$ we denote for simplicity   both $\varphi(u)$ and $\tilde{\varphi}(u)$, and $v_\eps$ will stand for both $\varphi(u_\eps)$ and $\tilde{\varphi}(u_\eps)$.  It follows that \eqref{k1} and \eqref{k2} both hold for $v$ and for any $\delta\le  \delta_1\le \delta_0$. 

If $\eta_0$ is given in {\em (3)} of Lemma \ref{lemmareg}, we next choose $\delta<\min\{\eta_0, \delta_1\}/2$ so that $v_\eps$ is uniformly bounded in $C^{2, \alpha}( S_\delta)$, and therefore converges to $u$ in $C^2( S_\delta)$. 
Since $u_\eps$ is uniformly bounded from below by a positive constant and both $\varphi$ and $\tilde{\varphi}$ are $C^2(\R_+)$, we infer that $v_\eps\to v$ in $C^2(S_\delta)$. Therefore \eqref{k1} continues to hold for any sufficiently small $\eps$, so that $v_\eps$ is locally strictly concave on $S_\delta$. 

We claim that \eqref{k2} holds as well for any (possibly smaller) $\eps$. Suppose not, so that there are
\[
\eps_n\to 0, \qquad  \bar x_n\in \partial\Omega_\delta,\qquad  x_n\in \Omega_\delta\setminus\{\bar x_n\}
\]
such that
\be
\label{condL}
L_{v_{\eps_n}, \bar x_n}(x_n)\ge v_{\eps_n}(x_n).
\ee
By compactness we can assume that $\bar x_n\to \bar x$, $x_n\to x\in\overline\Omega_\delta$. By eventually lowering $\delta_0$, we see by the smoothness and  strong convexity of $\Omega$ that there is a constant $c=c(\delta, \Omega)>0$ such that
\[
 B_{c\, \delta}(\bar x)\subseteq S_\delta, \qquad \forall \bar x\in \partial\Omega_\delta.
 \]
Since $v_{\eps_n}$ is strictly concave on $B_{c\, \delta}(\bar x_n)$, the points $x_n$ fulfilling \eqref{condL} must be at least $c\, \delta$ away from $\bar x_n$. Therefore the limit point $x$ must be at least $c\, \delta$ away from $\bar x$. By taking the limit in \eqref{condL} we find that
\[
L_{v, \bar x}(x)\ge v(x), \qquad x\ne \bar x
\]
contradicting the established validity of \eqref{k1} for $v$.
Therefore both \eqref{k1} and \eqref{k2} hold for $v_\eps$, if $\eps$ is sufficiently small, and the last statement of Proposition \ref{propKennington} ensures that the convexity function of $v_\eps$ cannot assume positive values on $\partial(\Omega_\delta\times\Omega_\delta)$.

We next show that, for any $\delta$ found in the previous point and any correspondingly small $\eps$, the convexity function of $\varphi(u_\eps)$ (or $\log u_\eps$, in the proof of Theorem \ref{Sth}) cannot have a positive interior maximum on $\Omega\times\Omega$. In order to do so, we will choose accordingly the function $G$ in the approximation procedure given in \eqref{eq:eps-min} (which, so far, played no r\^ole).
 
  \subsection{The transformed equation}
 We first consider the the number 
 \[
 M=\inf\big\{t>0:f(t)=0\big\}.
 \]
 In both   Theorems \ref{Sth} and \ref{Mth}, the function $t\mapsto f(t)/t^{p-1}$ is non-increasing (by Lemma \ref{lemma21} in the second case), hence
  $f$ vanishes identically on $[M, +\infty)$. Clearly we may assume that  $M>0$, for otherwise problem \eqref{eq:p-laplace} have no solutions at all. We thus assume that
  \be
  \label{assG}
  G(t)=G(M)\qquad \forall t\ge M
  \ee
 and claim that, under this assumption,
 \be
 \label{cl}
 \sup_\Omega u_\eps\le M.
 \ee
 Indeed, if this is not the case, the function $w_\eps=u_\eps-M$ is non-negative on the open set $\{u_\eps>M\}$, vanishes on its boundary, and attains a positive maximum inside. By \eqref{assG} and $f(t)=0$ for all $t\ge M$, we infer from \eqref{eq:eps-eq} that $w_\eps$ solves
 \[
 -\Div\Big(\big(\eps\, G(M)^{\frac{2}{p}}+\vert\nabla w_\eps\vert^2\big)^{\frac{p-2}{2}}\nabla w_\eps\Big)=0.
 \]
The operator on the left hand side  is monotone, therefore by comparison we obtain $w_\eps\equiv 0$ and thus a contradiction, proving \eqref{cl}. Notice that the truncation prescribed in \eqref{assG}, despite lowering the regularity of $G$, doesn't affect the validity of {\em (3)} of Lemma \ref{lemmareg}, due to \eqref{cl}.

For a, still to be determined, increasing diffeomorphism $\varphi\in C^1(\R_+,\R)$  with inverse $\psi$, set
\be
\label{condG}
G(t)=\big(\psi'(\varphi(t))\big)^p, \qquad t\in (0, M)
\ee
so that the oddness condition together with \eqref{assG} defines $G$ on the whole $\R$.
For the corresponding solutions $u_\eps$ of \eqref{eq:eps-eq} we set
  \[
  v_\eps:=\varphi(u_\eps)\,,
  \]
  and compute the equation solved by $v_\eps$. By construction, $\psi$ satisfies
  \be
  \label{f1}
  G(\psi(s))=\psi'(s)^p, \qquad g(\psi(s))=p\, \psi'(s)^{p-2}\, \psi''(s)
  \ee
 for $s\in v_\eps(\Omega)$ and moreover $\nabla u_\eps=\psi'(v_\eps)\nabla v_\eps$.
It follows that
  \[
  \eps\,  G(u_\eps)^{2/p}+\vert\nabla u_\eps\vert^2=\psi'(v_\eps)^2(\eps+|\nabla v_\eps|^2)
  \]
so that
  \be
  \label{eq1}
 \frac{\eps}{p}\, (\eps\, G(u_\eps)^{\frac{2}{p}}+\vert\nabla u_\eps\vert^2)^{\frac{p-2}{2}}G(u_\eps)^{\frac{2-p}{p}}g(u_\eps)=  \eps\, (\eps+|\nabla v_\eps|^2)^{\frac{p-2}{2}}\psi'(v(\eps)^{p-2} \psi''(v_\eps).
  \ee
Regarding the left hand side of \eqref{eq:eps-eq}, from
\[
D^2u_\eps=\psi''(v_\eps)\, \nabla v_\eps\otimes\nabla v_\eps+\psi'(v_\eps)\, D^2v_\eps
\]
we compute
  \be
  \label{eq2}
  \begin{split}
  &\Div \Big(\big(\eps\, G(u_\eps)^{2/p}+\vert\nabla u_\eps\vert^2\big)^{\frac{p-2}{2}}\nabla u_\eps\Big) =\\
  & \psi'(v_\eps)^{p-1}\Div\Big( \big(\eps+\vert \nabla v_\eps\vert^2 \big)^{\frac{p-2}{2}}\nabla v_\eps\Big) +(p-1)\, \psi'(v_\eps)^{p-2}\psi''(v_\eps)\left( \eps+\vert\nabla v_\eps\vert^2\right)^{\frac{p-2}{2}}\vert\nabla v_\eps\vert^2.
  \end{split}
  \ee
 Putting \eqref{eq1} and \eqref{eq2} into \eqref{eq:eps-eq}, we obtain
  \be
  \label{eg}
 - \Div\Big(\big(\eps+\vert \nabla v_\eps\vert^2 \big)^{\frac{p-2}{2}}\nabla v_\eps\Big) = \frac{f(\psi(v_\eps))}{(\psi'(v_\eps))^{p-1}}+  \frac{\psi''(v_\eps)}{\psi'(v_\eps)} \left( \eps+\vert\nabla v_\eps\vert^2\right)^{\frac{p-2}{2}}\big((p-1)\, \vert\nabla v_\eps\vert^2 -\eps\big).
 \ee
 Now we split the proof in two cases, choosing $\psi$ (or, equivalently, $G$) accordingly.
 
 \medskip
 \noindent
 \subsection{Non positivity in the interior}
 We finally choose the transformation. For the first statement of Proposition \ref{final}, let  $\varphi$  defined by \eqref{defvarphi}, which then fulfils \eqref{condG}  on $[0, M]$ for $G=F$, i.\,e.
   \be
  \label{f1}
  \psi'(s)=F^{1/p}(\psi(s))
  \ee
 Differentiating this relation we obtain 
\[
 \frac{f(\psi(s))}{(\psi'(s))^{p-1}}=p\, \frac{\psi''(s)}{\psi'(s)},
 \]
 which, inserted into \eqref{eg} gives 
\[
  \begin{split}
  -\Div\Big(\big(\eps+\vert\nabla v_\eps\vert^2 \big)^{\frac{p-2}{2}}\nabla v_\eps  \Big)&= \frac{\psi''(v_\eps)}{\psi'(v_\eps)}\, b(\nabla v_\eps)
  \end{split}
 \]
  where
  \be\label{eq:b}
  b(\nabla v_\eps):= p+\left((p-1)\, |\nabla v_\eps|^2-\eps\right)\left(\eps+\vert\nabla v_\eps\vert^2 \right)^{\frac{p-2}{2}}. 
  \ee
 Consider the function
 \[
 h(t)=p+\left((p-1)\, t-\eps\right)\left(\eps+t\right)^{\frac{p-2}{2}}, \qquad t\ge 0
 \]
 An elementary computation shows that $h$ is increasing, so that its minimum is $h(0)=p-\eps^{p/2}$. It follows that the function $b$ defined in \eqref{eq:b} is positive whenever $\eps<p^{2/p}$, which we will assume henceforth.
 
In order to apply Proposition \ref{propKennington}, we have to check that 
\be
\label{clf}
\frac{\psi''}{\psi'}\quad \text{is  non-increasing and harmonic concave on $v_\eps(\Omega)$ }.
\ee
From \eqref{cl} we infer $\sup_\Omega v_\eps\le \varphi(M)$
and clearly $\psi(s)>0$ for all $s\in v_\eps(\Omega)$, since $0<u_\eps=\psi(v_\eps)$. Therefore 
\be
\label{f5}
\psi(v_\eps(\Omega))\subseteq [0, M].
\ee

For the first assertion in \eqref{clf}, recall from \eqref{f1} that
\[
 \frac{\psi''}{\psi'}=\frac{(F^{1/p}\circ \psi)'}{\psi'}=(F^{1/p})'\circ \psi.
 \]
Since $F^{1/p}$ is assumed to be  concave on $(0, M)$ and \eqref{f5} holds true,   this expression is   non-increasing being the composition of a  non-increasing function with an increasing one.
  
Finally, we claim that $\psi'/\psi''$ is convex.  We first compute
  \[
  \frac{\psi'(s)}{\psi''(s)}= p\, \frac{F(\psi(s))}{f(\psi(s))} \, F(\psi(s))^{-1/p}.
  \]
The function $F/f$ is assumed to be convex on $(0, M)$ and then it is differentiable, except at most at countable set $A\subseteq [0, M]$. If $B=\varphi(A)$ and $s\in v_\eps(\Omega)\setminus B$, then $\psi(s)\in [0, M]\setminus A$ due to \eqref{f5}. Thus  $F/f$ is differentiable at $\psi(s)$ and it holds
  \[
  \begin{split}
  \left(  \frac{\psi'(s)}{\psi''(s)} \right)'&=p\, \left(\frac{F(\psi(s))}{f(\psi(s))} \right)' \, F^{-1/p}(\psi(s))-\frac{F(\psi(s))}{f(\psi(s))} \, F^{-1-1/p}(\psi(s))f(\psi(s))\psi'(s)\\
  &=p\, \left(\frac{F(\psi(s))}{f(\psi(s))} \right)' \frac{1}{\psi'(s)} -1
  \end{split}
  \]
where we used \eqref{f1} in the last step. Therefore
\[
 \left(  \frac{\psi'}{\psi''} \right)'= p\, \left(\frac{F}{f} \right)' \circ \psi -1\qquad \text{on  $v_\eps(\Omega)\setminus B$}
 \]
which is non-decreasing as composition of non-decreasing functions. Since $B$ is at most countable, the convexity of $\psi'/\psi''$ follows by well-known characterisations of convex function of the real line.

Hence Proposition \ref{propKennington} applies, and the convexity function of $\varphi(u_\eps)$ cannot attain a positive maximum on $\Omega\times\Omega$ and {\em a fortiori} on $\Omega_\delta\times\Omega_\delta$.
 
 \medskip
 
Regarding the second statement of Proposition \ref{final}, we choose in \eqref{eg} the function $G(t)=t^p$,
 which corresponds to $\psi(s)=e^s$, $\varphi(t)=\log t$. In this case \eqref{eg} reads
\[
 \Div\Big(\big(\eps+\vert \nabla v_\eps\vert^2 \big)^{\frac{p-2}{2}}\nabla v_\eps\Big) = \frac{f\big(e^{v_\eps}\big)}{e^{(p-1)v_\eps}}+   \left( \eps+\vert\nabla v_\eps\vert^2\right)^{\frac{p-2}{2}}\big((p-1)\, \vert\nabla v_\eps\vert^2 -\eps\big)
 \]
and the condition required in Theorem \ref{Mth} allow to apply Proposition \ref{propKennington} to exclude the positivity of the convexity function of $v_\eps$ in $\Omega\times\Omega$.


\end{document}